\definecolor{darkergreen}{rgb}{0.0, 0.5, 0.0}
\numberwithin{equation}{section}
\newcommand{\be}{\begin{eqnarray}}
	\newcommand{\ee}{\end{eqnarray}}
\newcommand{\ce}{\begin{eqnarray*}}
	\newcommand{\de}{\end{eqnarray*}}
\newtheorem{theorem}{Theorem}[section]
\newtheorem{lemma}[theorem]{Lemma}
\newtheorem{proposition}[theorem]{Proposition}
\newtheorem{Examples}[theorem]{Example}
\newtheorem{corollary}[theorem]{Corollary}
\newtheorem{definition}[theorem]{Definition}
\theoremstyle{definition}
\newtheorem{remark}[theorem]{Remark}
\newcommand{\rmk}[1]{\textcolor{black}{#1}}
\newcommand{\assign}{:=}
\newcommand{\cdummy}{\cdot}
\DeclareMathOperator{\supp}{supp}
\def\eps{\varepsilon}
\def\u{\mathbf{u}}
\def\p{\partial}
\def\<{{\langle}}
\def\>{{\rangle}}
\def\({{\Big(}}
\def\){{\Big)}}
\def\bx{{\mathbf{x}}}
\def\dif{{\mathord{{\rm d}}}}
\def\min{{\mathord{{\rm min}}}}
\def\no{\nonumber}
\def\={&\!\!=\!\!&}
\def\cB{{\mathcal B}}
\def\cF{{\mathcal F}}
\def\cG{{\mathcal G}}
\def\mN{{\mathbb N}}
\def\mQ{{\mathbb Q}}
\def\mR{{\mathbb R}}
\def\mU{{\mathbb U}}
\def\bP{{\mathbf P}}
\def\1{{\mathbf{1}}}
\def\E{\mathbf E}
\def\geq{\geqslant}
\def\leq{\leqslant}
\def\div{\mathord{{\rm div}}}
\def\eps{\varepsilon}
\def\u{\mathbf{u}}
\def\p{\partial}
\def\<{{\langle}}
\def\>{{\rangle}}
\def\({{\Big(}}
\def\){{\Big)}}
\def\bx{{\mathbf{x}}}
\def\dif{{\mathord{{\rm d}}}}
\def\min{{\mathord{{\rm min}}}}
\def\no{\nonumber}
\def\={&\!\!=\!\!&}
\def\bt{\begin{theorem}}
	\def\et{\end{theorem}}
\def\bl{\begin{lemma}}
	\def\el{\end{lemma}}
\def\br{\begin{remark}}
	\def\er{\end{remark}}
\def\bx{\begin{Examples}}
	\def\ex{\end{Examples}}
\def\bd{\begin{definition}}
	\def\ed{\end{definition}}
\def\bp{\begin{proposition}}
	\def\ep{\end{proposition}}
\def\bc{\begin{corollary}}
	\def\ec{\end{corollary}}
\def\geq{\geqslant}
\def\leq{\leqslant}
\def\div{\mathord{{\rm div}}}
\def\bP{{\mathbf P}}
 \def\R{\mathbb R}
 \def\R{\mathbb R}    
\def\N{\mathbb N}  
\def\<{\langle} \def\>{\rangle}
\begin{document}

\title[Non-uniqueness of Leray--Hopf solutions]{Non-uniqueness of Leray--Hopf solutions for stochastic forced Navier--Stokes equations}
	\author{Martina Hofmanov\'a}
\address[M. Hofmanov\'a]{Fakult\"at f\"ur Mathematik, Universit\"at Bielefeld, D-33501 Bielefeld, Germany}
\email{hofmanova@math.uni-bielefeld.de}

\author{Rongchan Zhu}
\address[R. Zhu]{Department of Mathematics, Beijing Institute of Technology, Beijing 100081, China} 
\email{zhurongchan@126.com}

\author{Xiangchan Zhu}
\address[X. Zhu]{ Academy of Mathematics and Systems Science,
	Chinese Academy of Sciences, Beijing 100190, China}
\email{zhuxiangchan@126.com}
\thanks{
	M.H. is grateful for funding from the European Research Council (ERC) under the European Union's Horizon 2020 research and innovation programme (grant agreement No. 949981) and 	the financial supports  by the Deutsche Forschungsgemeinschaft (DFG, German Research Foundation) – Project-ID 317210226--SFB 1283. R.Z. and X.Z. are grateful
	to the financial supports by National Key R\&D Program of China (No. 2022YFA1006300).
	R.Z. is grateful to the financial supports of the NSFC (No.  12271030) and BIT Science and Technology Innovation Program Project 2022CX01001.
	X.Z. is grateful to the financial supports  in part by National Key R\&D Program of China (No. 2020YFA0712700) and the NSFC (No.  12090014, 12288201) and
	the support by key Lab of Random Complex Structures and Data Science,
	Youth Innovation Promotion Association (2020003), Chinese Academy of Science. Rongchan Zhu is the corresponding author.
}

\begin{abstract}
We study the question of non-uniqueness of Leray--Hopf solutions to  stochastic forced Navier--Stokes equations on $\R^{3}$ starting from zero initial condition. Specifically, we consider a linear multiplicative noise and the equations are perturbed by an additional body force $f$. This type of noise  is particularly appealing due to the regularization by noise phenomena established in \cite{RZZ14}, which provides global uniqueness for arbitrary $f\in L^{2}(0,T;H^{s-1})$ for $s>3/2$ with high probability. Based on the ideas of Albritton, Bru\'e and Colombo \cite{ABC22}, we prove that there exists a force $f\in L^{1}(0,T;L^{p})$, $p<3$, so that  non-uniqueness of local-in-time probabilistically strong Leray--Hopf solutions as well as joint non-uniqueness in law of Leray--Hopf solutions on $\R^{+}$ hold true. In the deterministic setting, we show that the set of forces, for which Leray--Hopf solutions are non-unique, is dense in $L^{1}(0,T;L^{2})$. In addition, by a simple controllability argument we show that for every divergence-free initial condition in $L^{2}$ there is a force so that non-uniqueness of Leray--Hopf solutions holds.
 \end{abstract}

\date{\today}

\maketitle

\tableofcontents

\section{Introduction}

We address the issue of non-uniqueness of Leray--Hopf solutions to stochastic forced Navier--Stokes equations on $\R^{3}$. The equations take the form
\begin{equation}
	\label{1:intro}
	\aligned
	\dif u+\div(u\otimes u)\dif t+\nabla p\dif t&=\Delta u \dif t+ u\dif W+f\dif t,
	\\\div u&=0,
	\endaligned
\end{equation}
where $W$ is a real-valued Wiener process on some stochastic basis $(\Omega,\cF, (\cF_t)_{t\geq 0}, \bP)$.  Our primary focus is on the linear multiplicative noise which has attracted a lot of attention in the literature. Of particular relevance to our investigation  is the regularization by noise phenomena obtained in \cite{RZZ14} (also explored in  \cite{GHV14} for the case of Euler equations), with which  we  later reconcile our findings.

 In essence, Leray--Hopf  solutions adhere to the Navier--Stoke equations \eqref{1:intro} in the analytically weak sense, while also satisfying the crucial requirement of the energy inequality, rendering them highly pertinent from a physical standpoint. Formally, the corresponding energy equality is derived by  testing the Navier--Stokes equations with the solution itself or, in other words, by applying It\^o's formula to the $L^{2}$-norm of the solution. However, due to the limited regularity of weak solutions, executing this testing procedure rigorously proves challenging; hence, a preliminary approximation method  is necessary, followed by the application of lower semicontinuity, yielding only an inequality in the limit. Various formulations of the energy inequality exist, and we defer the precise definition to Section~\ref{sec:3.2}.

Even in the absence of stochastic forcing, the (non)uniqueness of Leray--Hopf solutions has historically represented a prominent open problem in fluid dynamics,  that was only recently resolved in the groundbreaking work  \cite{ABC22}. Specifically, the authors demonstrated that for the initial condition $u_{0}=0$, there exists a time $T>0$ and a force $f\in L^{1}(0,T;L^{2})$ with two distinct Leray--Hopf solutions to the deterministic forced Navier--Stokes equations on $[0,T]\times\R^{3}$. The proof relies on the construction of a smooth, compactly supported steady state of the Navier--Stokes equations in similarity variables, which is inherently linearly unstable in dynamics, evidenced by the presence of an unstable eigenvalue in the corresponding linearized operator. The force $f$ is implicitly defined to ensure satisfaction of the Navier--Stokes equations. Consequently, the second solution follows a trajectory on an unstable manifold, with the two solutions differing in their decay as $t\to0^{+}$.

Prior to the seminal work  \cite{ABC22}, the non-uniqueness of weak solutions lacking the energy inequality to the deterministic forced Navier--Stokes equations was established in \cite{BV19a} through convex integration. However, the incorporation of the energy inequality was only achieved for the $p$-Navier--Stokes equations, where the Laplacian is replaced by the $p$-Laplacian with $p\in (1,6/5)$, as demonstrated in \cite{BMS20}, and for fractional Navier--Stokes equations, as seen in \cite{CDD18}. These findings served as the foundation for various non-uniqueness results concerning the stochastic counterpart of the equations, such as non-uniqueness in law \cite{HZZ19}, non-uniqueness of Markov solutions \cite{HZZ21markov}, and non-uniqueness of ergodic stationary solutions \cite{HZZ22}. Furthermore, also global existence and non-uniqueness when the system is driven by space-time white noise was established in \cite{HZZ21}. Stochastic power law fluids were addressed in \cite{LZ23}.

In this study, we show that the construction developed  in \cite{ABC22} can be suitably adapted to the framework of \eqref{1:intro}, thereby proving the following local-in-time result. For precise assumptions and  statement, we direct the reader to Theorem~\ref{th:2}.

\begin{theorem}\label{th:1intro}
There exists an $(\mathcal{F}_{t})$-stopping time $T>0$ and an $(\mathcal{F}_{t})$-adapted $f\in L^{1}(0,T;L^{2})$ $\bf{P}$-a.s. such that there are two distinct $(\mathcal{F}_{t})$-adapted Leray--Hopf solutions to the Navier--Stokes equations \eqref{1:intro} on $[0,T]\times\R^{3}$ with  zero initial datum.
\end{theorem}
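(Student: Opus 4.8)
The plan is to remove the multiplicative noise by the standard Dol\'eans--Dade transformation, reduce the problem to a \emph{random} PDE with a scalar time-dependent coefficient, and then adapt the deterministic construction of \cite{ABC22} pathwise, treating that coefficient as a small nonautonomous perturbation that is concentrated near the singular time $t=0$.

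First I would set $\Gamma_t:=W_t-\tfrac12 t$ and look for solutions of \eqref{1:intro} (in the linear multiplicative case $G(u)=u$) of the form $u=e^{\Gamma_t}v$ with $v$ divergence-free. Since $e^{\Gamma_t}=\mathcal E(W)_t$ is the stochastic exponential, It\^o's formula shows that $u$ solves \eqref{1:intro} if and only if
\[ \partial_t v + e^{\Gamma_t}\,\div(v\otimes v) - \Delta v + \nabla \tilde p = \tilde f, \qquad \div v = 0, \]
with $\tilde f=e^{-\Gamma_t}f$; the drift $-\tfrac12 t$ in $\Gamma$ is chosen precisely so that the It\^o correction cancels the zeroth-order linear term. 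If $v$ and $\tilde f$ are $(\cF_t)$-adapted then so are $u$ and $f$, and distinctness is preserved by the bijection $v\mapsto e^{\Gamma_t}v$, so it suffices to produce, for $\bP$-a.e.\ $\omega$, two distinct Leray--Hopf solutions of the random PDE sharing one adapted force. I would record that $e^{\Gamma_t(\omega)}$ is continuous, strictly positive, equal to $1$ at $t=0$, and I would fix the horizon by the stopping time $T:=\inf\{t\ge0:|\Gamma_t|\ge\delta\}\wedge 1$, which is $\bP$-a.s.\ strictly positive by continuity; on $[0,T]$ the coefficient stays within $[e^{-\delta},e^{\delta}]$.

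Next I would import from \cite{ABC22} the smooth, compactly supported, linearly unstable steady state $\bar U$ of the Navier--Stokes system in similarity variables $\xi=x/\sqrt t$, $\tau=\log t$, together with its unstable eigenvalue $\lambda$ ($\Re\lambda>0$), the eigenfunction $\eta$ of the linearized operator $L_{ss}$, and the associated semigroup and spectral-gap estimates. Setting $\bar v(x,t):=t^{-1/2}\bar U(x/\sqrt t)$ and defining the force
\[ g := \partial_t \bar v + e^{\Gamma_t}\,\div(\bar v\otimes \bar v) - \Delta \bar v + \nabla \bar p, \]
makes $\bar v$ an exact solution by construction. A scaling count gives $\|\bar v(\cdot,t)\|_{L^2}\sim t^{1/4}$ and $\|g(\cdot,t)\|_{L^2}\lesssim t^{-3/4}$, so $\bar v(\cdot,0)=0$ in $L^2$ and $g\in L^1(0,T;L^2)$; boundedness of $e^{\Gamma_t}$ on $[0,T]$ keeps $\tilde f=g$, hence $f=e^{\Gamma_t}g$, in $L^1_tL^2_x$ and adapted. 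This $\bar v$ is the first solution. The second is a trajectory $v=\bar v+w$ on the unstable manifold; writing $\phi$ for the similarity profile of $w$ and using that the pure-background terms are cancelled by $g$, the perturbation solves
\[ \partial_\tau \phi = L_{ss}\phi + B(\phi,\phi) + E(\tau,\phi), \]
where $B$ is the self-similar quadratic nonlinearity and $E$ collects all contributions of $e^{\Gamma_{e^\tau}}-1$ (both the advection terms linear in $\phi$ and the quadratic ones). The crucial point is that $e^{\Gamma_{e^\tau}}-1\to0$ as $\tau\to-\infty$, with $|\Gamma_{e^\tau}|\lesssim e^{\tau/2}|\tau|^{1/2}$ by continuity of $W$ at $0$ (or the law of the iterated logarithm), so $E$ is an exponentially decaying, genuinely nonautonomous error carrying no $\phi$-independent part. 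I would then mirror the fixed point of \cite{ABC22} on $(-\infty,\tau_0]$ with $\tau_0=\log T$, using the Duhamel formula based on the autonomous semigroup $e^{(\tau-s)L_{ss}}$ and seeking $\phi(\tau)\sim e^{\lambda\tau}\Re\eta$ in a weighted space: the spectral gap controls the linear flow, the quadratic term is handled as in \cite{ABC22}, and $E$ is absorbed precisely because the extra factor $e^{\tau/2}$ makes it subcritical relative to the eigenmode. Undoing the similarity change yields $w\not\equiv0$ decaying strictly faster than $t^{1/4}$, whence $v=\bar v+w\neq\bar v$.

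Finally I would transfer back: $u^{(1)}=e^{\Gamma_t}\bar v$ and $u^{(2)}=e^{\Gamma_t}(\bar v+w)$ are two distinct $(\cF_t)$-adapted solutions of \eqref{1:intro} on $[0,T]$ with the common adapted force $f=e^{\Gamma_t}g$ and zero initial datum. The energy inequality for $v$ (whose nonlinearity $e^{\Gamma_t}\div(v\otimes v)$ is still $L^2$-orthogonal to $v$) turns into the Leray--Hopf energy inequality for $u$ by applying It\^o's formula to $e^{2\Gamma_t}\|v\|_{L^2}^2$: the martingale and the It\^o-correction term $\|u\|_{L^2}^2\,\dif t$ match exactly the quadratic variation of the noise, and positivity of $e^{2\Gamma_t}$ preserves the inequality. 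I expect the main obstacle to be the construction of $\phi$: unlike in \cite{ABC22} the linearized dynamics are nonautonomous, so the unstable manifold theorem cannot be invoked verbatim, and the argument must be reorganized around the limiting operator $L_{ss}$ with $e^{\Gamma_{e^\tau}}-1$ treated perturbatively, the weighted fixed point closing only thanks to the exponential decay of this perturbation as $\tau\to-\infty$. Secondary care is required for the joint measurability and $(\cF_t)$-adaptedness of the pathwise construction in $\omega$ and for verifying the (local) energy inequality demanded of Leray--Hopf solutions.
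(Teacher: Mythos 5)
Your proposal is correct in substance but takes a genuinely different route from the paper at the key transformation step. You stop after the one-step Dol\'eans--Dade transform $u=e^{W_t-t/2}v$, so the random factor $e^{\Gamma_t}$ stays on the nonlinearity; the paper instead performs a second, random time change $w(t)=v\big(\int_0^t e^{W(s)-s/2}\,\dif s\big)$ (see \eqref{tr}) precisely in order to move that factor onto the Laplacian, where it becomes a random viscosity $h(\theta^{-1}(t))$ in \eqref{eql1}. Each choice has a price. In the paper, the perturbative term in the $U^{per}$-equation is of second order, $(h(\theta^{-1}(e^\tau))-1)\Delta(U^{per}+U^{lin})$, which cannot be closed with the semigroup bound of Lemma~\ref{lem:Lss} alone; this is why the paper works in the Besov space $B^N_{2,\infty}$ and proves the new Littlewood--Paley smoothing estimate of Lemma~\ref{lem:im}. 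In your setup the nonautonomous error consists of $(e^{\Gamma_{e^\tau}}-1)\mathbb{P}(\bar U\cdot\nabla\phi+\phi\cdot\nabla\bar U)$ plus $e^{\Gamma}$ times quadratic terms; since $\bar U$ is smooth and compactly supported these lose only one derivative, so Lemma~\ref{lem:Lss} with $(\sigma_1,\sigma_2)=(N-1,N)$ (the integrable $(\tau-s)^{-1/2}$ singularity) suffices and no analogue of Lemma~\ref{lem:im} is needed. Conversely, the paper's time change is exactly what makes its adaptedness bookkeeping delicate (Lemmas~\ref{lem:1}--\ref{lem:3}, the filtration $\hat\cF_t=\cF_{\theta^{-1}(t)}$, the stopping time $T_0=\theta^{-1}\circ\hat T$); in your approach there is no time change, each Picard iterate of the Duhamel map at similarity time $\tau$ depends only on $W|_{[0,e^\tau]}$, and adaptedness is essentially automatic. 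Both constructions ultimately rest on the same mechanism: the coefficient deviates from $1$ at rate $t^{1/4}$ near $t=0$ (your LIL bound, the paper's estimate \eqref{estb}), which makes the perturbation subcritical relative to the unstable mode $e^{a\tau}$, and both transfer the energy inequality back via It\^o's formula applied to $e^{2\Gamma_t}\|v\|_{L^2}^2$.

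One technical caveat: the stopping time you actually define, $T=\inf\{t\ge0:|\Gamma_t|\ge\delta\}\wedge1$, only yields boundedness $e^{\Gamma_t}\in[e^{-\delta},e^{\delta}]$, and boundedness is not enough. The source term of size $|e^{\Gamma_{e^s}}-1|\,e^{as}$ fed into Duhamel gives $\int_{-\infty}^{\tau}e^{(\tau-s)(a+\delta)}\,|e^{\Gamma_{e^s}}-1|\,e^{as}\,\dif s$, which diverges if one only knows $|e^{\Gamma_{e^s}}-1|\lesssim 1$; the fixed point closes only with the decay $|e^{\Gamma_{e^s}}-1|\lesssim e^{s/4}$ that you yourself identify as crucial. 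To have this decay \emph{and} adaptedness, the horizon must be cut by a stopping time encoding the H\"older (or LIL) modulus of $W$ near $0$, e.g. $\tau_R=\inf\{t>0:\|W\|_{C_t^{1/4}}\ge R\}$, exactly as the paper does inside the proof of Lemma~\ref{lem:per}. This is a straightforward repair, consistent with the decay you invoke, but as written your chosen $T$ does not deliver it.
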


Furthermore, we extend the aforementioned solutions to the entire time interval $\mathbb{R}^{+}$ utilizing the probabilistic extension technique introduced in \cite{HZZ19}. The method was originally devised to prolong the existence of convex integration solutions, which exist up to a stopping time, to global solutions. At the stopping time, the convex integration solutions were connected to Leray--Hopf solutions, which exist for  every divergence-free initial condition in $L^{2}$. In the deterministic context, such a connection is straightforward. However, in the stochastic setting, the subtlety arises from the probabilistic weakness of Leray--Hopf solutions; they cannot be constructed on every probability space with a given Wiener process, yet these probabilistic elements become integral components of the solution. The probabilistic extension method from \cite{HZZ19} has since been widely employed, particularly in scenarios where convex integration is applied within the stochastic framework (see, for instance, \cite{Ya20a, Ya20b}).

Unlike Leray--Hopf solutions, the convex integration solutions are probabilistically strong, meaning they are adapted to the given Wiener process. Remarkably, the solutions derived in Theorem~\ref{th:1intro} are also probabilistically strong. To facilitate the application of the probabilistic extension technique from \cite{HZZ19}, we initially elevate these solutions to probability measures on the canonical space of trajectories up to a stopping time. This canonical space is generated by both the solution and the noise. Subsequently, these probability measures are extended by the laws of classical Leray--Hopf solutions to encompass probability measures on trajectories over $\mathbb{R}^{+}$. The resulting solutions adhere to the energy inequality throughout $\mathbb{R}^{+}$ and are thus classified as Leray--Hopf solutions. Consequently, this allows us to establish joint non-uniqueness in law within this class, specifically, non-uniqueness of the joint laws of $(u,W)$. The proof of this result is provided in Theorem~\ref{th:1law}.

\begin{theorem}\label{th:2intro}
There exists $f$, a measurable functional of the driving Wiener process $W$, such that joint non-uniqueness in law holds true in the class of Leray--Hopf solutions.
\end{theorem}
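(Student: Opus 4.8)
The plan is to take the two distinct probabilistically strong local solutions furnished by Theorem~\ref{th:1intro} and extend them to the whole half-line $\R^{+}$ by the probabilistic gluing of \cite{HZZ19}, arranging matters so that the extended objects remain Leray--Hopf and that the two resulting joint laws of $(u,W)$ stay distinct. Concretely, let $u^{1},u^{2}$ be the two adapted solutions on $[0,T]$ driven by the same $W$ and the same adapted force $f=f(W)$, with $u^{1}_{0}=u^{2}_{0}=0$; since they are probabilistically strong, each $u^{i}$ is a measurable functional of $W$. I would set up a canonical trajectory space carrying the pair $(u,W)$ — say $C(\R^{+};H^{-N})\cap L^{\infty}_{\mathrm{loc}}(\R^{+};L^{2})\cap L^{2}_{\mathrm{loc}}(\R^{+};H^{1})$ for the velocity together with the path space of the Wiener process — equipped with its canonical filtration, and reformulate ``being a (probabilistically weak) Leray--Hopf solution'' as a martingale problem on this space. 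Pushing forward by $(u^{i},W)$ then yields two probability measures $P^{i}$ that are martingale solutions up to the canonically expressed stopping time $\mathcal{T}$, and the energy inequality holds on $[0,\mathcal{T}]$.

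Second, I would extend beyond $\mathcal{T}$. The key ingredient is a measurable family $\{Q_{v_{0}}\}_{v_{0}\in L^{2}_{\sigma}}$ of martingale Leray--Hopf solutions of \eqref{1:intro} on $\R^{+}$ issued from each divergence-free datum $v_{0}$: existence of such solutions is classical (of Flandoli--G\k{a}t\k{a}rek type), while a measurable dependence on $v_{0}$ is produced by a Stroock--Varadhan / Jakubowski--Skorokhod measurable-selection argument. I would turn the force off (or keep it a fixed functional of $W$) after $\mathcal{T}$, so that $f$ remains globally a measurable functional of $W$. One then forms the concatenation $\tilde P^{i}:=P^{i}\otimes_{\mathcal{T}}Q$ by disintegrating $P^{i}$ at time $\mathcal{T}$ with respect to $\cF_{\mathcal{T}}$ and gluing the conditional law to $Q_{u(\mathcal{T})}$ along the \emph{same} continued Wiener path; the abstract gluing result of \cite{HZZ19} guarantees that $\tilde P^{i}$ is again a martingale solution on $\R^{+}$ driven by a genuine Wiener process.

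Third, I would check that $\tilde P^{i}$ is Leray--Hopf on all of $\R^{+}$: the energy inequality on $[0,\mathcal{T}]$ inherited from $P^{i}$ is concatenated with the one satisfied by $Q_{u(\mathcal{T})}$ from $\mathcal{T}$ onward, the two matching at $\mathcal{T}$, where the velocity value and the accumulated stochastic and forcing integrals coincide by (weak) continuity; hence the inequality closes up for all $t\ge 0$. In the linear multiplicative case the It\^o correction is handled as usual, e.g.\ by the exponential transformation removing the multiplicative noise. Finally, for non-uniqueness: since $\mathcal{T}>0$ $\bP$-a.s.\ and $\tilde P^{i}$ agrees with $P^{i}$ before $\mathcal{T}$, and since $u^{1}\neq u^{2}$ on a set of positive probability already as $t\to 0^{+}$ (so that the two functionals of $W$ have distinct graphs and $\Law(u^{1},W)\neq\Law(u^{2},W)$), the restrictions of $\tilde P^{1}$ and $\tilde P^{2}$ to the stopped path space differ. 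Therefore $\tilde P^{1}\neq\tilde P^{2}$, which is exactly joint non-uniqueness in law within the Leray--Hopf class.

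I expect the main obstacle to be the gluing step. One must attach the post-$\mathcal{T}$ Leray--Hopf dynamics to the local solution while (a) keeping the driving process a single Wiener process across $\mathcal{T}$ rather than splicing in independent noise — which forces the whole argument to be carried out at the level of the martingale problem on the joint $(u,W)$-space, exploiting the strong Markov/flow structure of the selected family $\{Q_{v_{0}}\}$ — and (b) ensuring that the global energy inequality, adaptedness, and the measurability of the selection all survive the concatenation. The tension between the random stopping time, the probabilistically weak nature of the Leray--Hopf extension, and the probabilistically strong nature of the local solutions is precisely where the care is needed.
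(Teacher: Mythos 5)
Your proposal follows essentially the same route as the paper's proof of Theorem~\ref{th:2intro} (carried out in Section~\ref{sec:3.2} via Theorem~\ref{th:1law}): lift the two probabilistically strong local solutions of Theorem~\ref{th:1intro} to measures on the joint canonical space of $(u,W)$, extend past the stopping time by gluing with probabilistically weak Leray--Hopf solutions in the spirit of \cite{HZZ19} (Propositions~\ref{prop:1 1}--\ref{prp:ext2 1}), verify that the energy inequality concatenates, and distinguish the glued laws by the pre-stopping-time behavior. The obstacle you flag as the main difficulty---keeping a single Wiener process across the stopping time while preserving adaptedness and the energy inequality---is exactly what the paper resolves by defining the stopping time $\bar T$ intrinsically on the canonical space through the noise component alone and verifying condition \eqref{Q1 1} via the H\"older-continuity structure of the approximating stopping times.
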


As previously mentioned, we aim  to compare the above results with the available well-posedness theory. Specifically, we consider a form of noise that provides a regularization effect, as demonstrated in \cite{RZZ14}. The Navier--Stokes system under consideration in the latter work takes the following form
\begin{align}\label{eq:p}
\dif u+\div (u\otimes u)\dif t+\nabla p\dif t=\Delta u\dif t+\beta u\dif W,\quad \div u=0,
\end{align}
with $W$ a being Wiener process and $\beta\in \R$. It was established that local strong solutions exist and are unique for initial conditions in $H^{s}$ with $s>3/2$. Furthermore, for every $\varepsilon>0$ there exists $\kappa=\kappa(\beta^{2},\varepsilon)$ satisfying $\lim_{\beta\to\infty} \kappa(\beta^{2},\varepsilon)=\infty$ such that whenever $\|u_0\|_{H^s}\leq \kappa$ for some $s>3/2$ then the solution is global with probability bigger than $1-\varepsilon$.
The basic idea is that letting $v=e^{-\beta W}u$   It\^o's formula implies
\begin{align}\label{eq:po}
	\p_t v+\frac{\beta^2}2v+e^{\beta W}\div (v\otimes v)+\nabla p_v=\Delta v.
\end{align}
Here, the second term $\frac{\beta^2}2v$ furnishes sufficient dissipation to yield global solutions with high probability.

 To facilitate comparison with the findings of this paper, we re-examine the proof presented in \cite{RZZ14} and introduce an additional body force $f$ into \eqref{eq:p}. It emerges that the results established in \cite{RZZ14} remain applicable under the condition $f\in L^{2}(0,T;H^{s-1})$, or $f$ can also be random provided $f\in L^{2}(\Omega;L^{2}(0,T;H^{s-1}))$ and it is progressively measurable. Specifically, for any force in this function space, the solution originating from the zero initial condition is both unique and globally defined with a high probability.
However, this level of regularity does not extend to the force derived in Theorem~\ref{th:1intro} and Theorem~\ref{th:2intro} (as particularly evident in (1.24) of \cite{ABC22}), which belongs only to $L^{1}(0,T;L^{p})$ for every $p<3$. Consequently, such forces give rise to solutions that are distinct immediately after the initial time with full  probability.
Moreover, the integrability condition $L^{1}(0,T;L^{p})$ for every $p<3$ is sharp locally in time, mirroring the deterministic scenario: if $f\in L^{1}(0,T;L^{3})$, then uniqueness of solutions is assured for solutions within the class $C([0,T];L^{3})$, which exist locally in time (refer to Remark~\ref{r:7} for further elaboration).

Our proof of Theorem~\ref{th:1intro} and Theorem~\ref{th:2intro}  relies on a novel transformation of the Navier--Stokes system \eqref{1:intro} to a random partial differential equation. Unlike the conventional exponential transformation $v=e^{-W}u$ leading to an analogue of \eqref{eq:po}, we adopt a two-step transformation. This approach enables us to relocate the random and time-dependent coefficient away from the nonlinearity and towards the dissipative term, where it manifests as a viscosity.
To elaborate, we define $w(t):=e^{-(W(t)-t/2)}u(t)$ and $w(t)=:v(\int_{0}^{t}e^{W(s)-s/2}\mathrm{d}s)=:(v\circ\theta)(t)$. Notably, the function $\theta$ is invertible. By applying It\^o's formula, we obtain (refer to Section~\ref{sec:loc} for further details):
\begin{equation}\label{eq:v}
\partial_t v+\div (v\otimes v)+\nabla \pi=h(\theta^{-1}(t))\Delta v+g,
\end{equation}
for an appropriately defined pressure $\pi$, force $g$ and viscosity $h(\theta^{-1}(t))$.

In essence, we apply the methodology of \cite{ABC22} to the transformed equation \eqref{eq:v}. Notably, the modification solely affects the dissipative term, which is inherently treated as a perturbation. To address this perturbation, we establish a novel regularity estimate for the semigroup generated by the linearized operator (denoted as $e^{\tau L_{ss}}$ in Section~\ref{s:2.3}) utilizing a Littlewood--Paley decomposition and paraproducts. This approach enables us to construct two distinct Leray--Hopf solutions to \eqref{eq:v}. However, in order to obtain meaningful solutions upon retransformation to the original equation \eqref{1:intro}, it is imperative to ensure their adaptiveness with respect to the original filtration $(\mathcal{F}_{t})_{t\geq0}$ generated by the Wiener process $W$. This is the key difference to the deterministic scenario and it is rather delicate due to the random time rescaling $\theta$. In particular, it requires several  auxiliary results concerning stopping times, filtrations and adaptedness.

\br \label{additive}
Our methodology readily extends to the scenario of forced Navier--Stokes equations perturbed with additive noise. Specifically, employing the Da Prato--Debussche trick, wherein the equations are decomposed into a linear stochastic Stokes system and a nonlinear random PDE for the remainder (as demonstrated in, for instance, \cite{HZZ19}), analogous arguments to those presented in Lemma \ref{lem:per} below yield multiple solutions to the nonlinear equation originating from the zero initial condition. Moreover, the probabilistic extension of solutions from \cite{HZZ19} facilitates the extension of our findings outlined in Theorem~\ref{th:1intro} and Theorem~\ref{th:2intro} to this context. Notably, the proof in the case of additive noise is comparatively straightforward, as no alteration of filtration is required. For further details and an additional result in a hyperviscous setting, the interested reader is directed to \cite{BJLZ23}.
\er

\medskip

Finally, we present several results in the deterministic setting. \rmk{We consider the following equation:
\begin{equation}
	\label{1:intro1}
	\aligned
	\partial_t u+\div(u\otimes u)+\nabla p&=\Delta u +f,
	\\\div u&=0.
	\endaligned
\end{equation}}

\begin{theorem}\label{th:3intro}
In the deterministic setting, non-uniqueness of Leray--Hopf solutions holds in the following situations.
\begin{enumerate}
\item For every $g\in L^{1}(0,1;L^{2})$ and every $\varepsilon>0$ there exists $f\in L^{1}(0,1;L^{2})$ satisfying $$\|g-f\|_{L^{1}(0,1;L^{2})}\leq\varepsilon$$
such that Leray--Hopf solutions with zero initial condition to  \eqref{1:intro1} are non-unique.
\item For every divergence-free initial condition $u_{0}\in L^{2}$ there exists $f\in L^{1}(0,1;L^{2})$ such that Leray--Hopf solutions to  \eqref{1:intro1} with the initial condition $u_{0}$ are non-unique.
\end{enumerate}
\end{theorem}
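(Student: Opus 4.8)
My plan is to derive both statements from the short–time deterministic non-uniqueness of \cite{ABC22} together with parabolic scaling and surgery in time. The base input is: there are $T_0>0$, a force $f_0\in L^1(0,T_0;L^2)$ and two distinct Leray--Hopf solutions $u^1,u^2$ of \eqref{1:intro} with $G=0$ on $[0,T_0]\times\R^3$ starting from zero datum. After truncating the interval I may assume $u^1(T_0)\neq u^2(T_0)$, since two distinct trajectories in $C_w([0,T_0];L^2)$ must differ at some interior time. The engine for both parts is the scaling $u^i_\lambda(x,t)=\lambda u^i(\lambda x,\lambda^2 t)$, $f_\lambda(x,t)=\lambda^3 f_0(\lambda x,\lambda^2 t)$, $\lambda\ge1$. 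Since \eqref{1:intro} with $G=0$ and its energy inequality are invariant under this scaling, $u^1_\lambda,u^2_\lambda$ are distinct Leray--Hopf solutions on $[0,T_0\lambda^{-2}]$ with zero datum and force $f_\lambda$, they still differ at the endpoint $T_0\lambda^{-2}$, and a direct change of variables gives $\|f_\lambda\|_{L^1(0,T_0\lambda^{-2};L^2)}=\lambda^{-1/2}\|f_0\|_{L^1(0,T_0;L^2)}$. Thus for large $\lambda$ the non-uniqueness is carried by a force of arbitrarily small $L^1L^2$ norm living on an arbitrarily short initial interval.

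For part (1), given $g$ and $\varepsilon$, I first cut $g$ off near $t=0$: by absolute continuity of the integral there is $\rho>0$ with $\int_0^\rho\|g(t)\|_{L^2}\,dt<\varepsilon/2$, so $\tilde g:=g\mathbf 1_{[\rho,1]}$ obeys $\|g-\tilde g\|_{L^1(0,1;L^2)}<\varepsilon/2$. Next I pick $\lambda$ so large that $T_0\lambda^{-2}<\rho$ and $\lambda^{-1/2}\|f_0\|_{L^1L^2}<\varepsilon/2$ and set
\[
f:=f_\lambda\ \text{on }[0,T_0\lambda^{-2}],\qquad f:=0\ \text{on }[T_0\lambda^{-2},\rho],\qquad f:=g\ \text{on }[\rho,1],
\]
so that $\|f-g\|_{L^1(0,1;L^2)}<\varepsilon$. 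On $[0,T_0\lambda^{-2}]$ the force equals $f_\lambda$, hence $u^1_\lambda,u^2_\lambda$ solve there; I continue each of them on $[T_0\lambda^{-2},1]$ by a Leray--Hopf solution with force $f$ and initial state $u^i_\lambda(T_0\lambda^{-2})$, which exists by Leray's theory for $L^2$ data and $L^1L^2$ forcing. The two resulting functions on $[0,1]$ already differ at the interior time $T_0\lambda^{-2}$, hence are distinct, and each is Leray--Hopf on $[0,1]$: the energy inequality on $[0,T_0\lambda^{-2}]$ and on $[T_0\lambda^{-2},1]$ combine, via continuity at the junction, into the global inequality from $s=0$.

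For part (2), I fix a divergence-free $u_0\in L^2$, choose $t_1\in(0,1)$, steer $u_0$ to the zero state on $[0,t_1]$, and then run the short-time construction on $[t_1,1]$. Let $v$ be a Leray--Hopf solution of \eqref{1:intro} with $G=0$, $f=0$, datum $u_0$. By Leray's structure theorem there is a regular epoch, i.e. $s_0\in(0,t_1)$ at which $a:=v(s_0)$ is smooth and lies in every $H^s$. On $[0,s_0]$ I keep $v$ with zero force; on $[s_0,t_1]$ I take $z(t):=\psi(t)\,a$ for a smooth scalar $\psi$ with $\psi(s_0)=1$, $\psi(t_1)=0$, whose associated force $\psi' a-\psi\Delta a+\psi^2\div(a\otimes a)$ (modulo a pressure gradient) lies in $L^\infty(s_0,t_1;L^2)\subset L^1(s_0,t_1;L^2)$ because $a$ is smooth; this drives the state to $z(t_1)=0$. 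On $[t_1,1]$ I insert, exactly as in part (1), a rescaled construction from the zero datum producing two branches that differ on $[t_1,1]$. Concatenating, the two trajectories agree on $[0,t_1]$, differ on $[t_1,1]$, emanate from $u_0$, and are Leray--Hopf on $[0,1]$ by the same additivity of the energy inequality (using $z(t_1)=0$).

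The routine points are the scaling identities and the concatenation of energy inequalities across junctions. The genuine difficulty is the controllability step in part (2): driving a merely $L^2$ field to zero with a force in $L^1(0,1;L^2)$ is obstructed by the fact that smoothing rough data forces the nonlinear term $\div(z\otimes z)$ to blow up like $t^{-5/4}$, which is not time-integrable. The way around this, and the one technical input I would justify carefully, is to let the unforced Navier--Stokes flow regularize $u_0$ by itself and to apply the control only from a regular epoch onward, where the state is smooth and the trivial rescaling control $z=\psi a$ is admissible.
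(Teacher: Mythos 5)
Your proposal is correct, but its two parts relate differently to the paper's argument. For part (2) you follow essentially the paper's own proof (Theorem \ref{thm:6}): let the unforced Leray--Hopf flow regularize $u_0$ up to a regular epoch (the paper stops at $H^2$, invoking weak--strong uniqueness exactly as you do), steer the regular state to zero by a time-dependent ramp --- the paper's linear interpolation $\frac{2T^{*}-t}{T^{*}}\tilde u(T^{*})$ is precisely your $\psi(t)a$ with linear $\psi$, producing a force in $C([T^{*},2T^{*}];L^2)$ --- and then restart the construction of \cite{ABC22} from the zero state. For part (1), however, your route is genuinely different. The paper first proves the stronger Theorem \ref{th:1}: for every $f$ in an explicit class $Y$ of forces with prescribed decay as $t\to 0$, the force $\bar f+f$ yields two distinct (even suitable) Leray--Hopf solutions; this requires re-running the fixed point argument of \cite{ABC22} in similarity variables with the additional forcing (Propositions \ref{pro:1} and \ref{pro:2}), and density then follows by mollifying the given $g$ into $Y$ (Corollaries \ref{c:1} and \ref{c:2}). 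You instead use \cite{ABC22} as a black box: the parabolic rescaling $u^i_\lambda(x,t)=\lambda u^i(\lambda x,\lambda^2 t)$, $f_\lambda(x,t)=\lambda^3 f_0(\lambda x,\lambda^2 t)$ preserves the equation and the energy inequality and contracts the force norm, $\|f_\lambda\|_{L^1(0,T_0\lambda^{-2};L^2)}=\lambda^{-1/2}\|f_0\|_{L^1(0,T_0;L^2)}$ (this computation is right); truncating so that the two branches differ at the endpoint, splicing $f_\lambda$ with $g$ after a small time $\rho$, and extending both branches by Leray--Hopf solutions gives the claim, the global (from $t=0$) energy inequality being the sum of the inequality on $[0,T_0\lambda^{-2}]$ and the extension's inequality started at the junction value --- all of which checks out. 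Your argument is more elementary, since no spectral or semigroup estimates are re-derived, and it yields a force agreeing with $g$ outside an arbitrarily small neighbourhood of $t=0$. What the paper's heavier route buys in exchange is the structural statement that non-uniqueness holds for \emph{every} perturbation $\bar f+f$ with $f\in Y\supset C_c^\infty((0,1)\times\mR^3)$, together with suitability (the local energy inequality) of both solutions, which your gluing would preserve only if the Leray extensions are chosen suitable.
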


The result in \emph{(1)} is achieved through Theorem~\ref{th:1} and Corollary~\ref{c:2} by a modification of the construction from \cite{ABC22}. The result in \emph{(2)} is proved in Theorem~\ref{thm:6} by a simple controllability argument in the spirit of \cite{Fla97} and no implicit modification of the proof of \cite{ABC22} is necessary.

The paper is organized as follows. In Section~\ref{sec:2} we introduce the notation, set the basic general assumptions on the operator $G$ and recall the key elements of the construction from \cite{ABC22}. The case of linear multiplicative noise is treated in Section~\ref{s:3}.
Section~\ref{sec:det} and Section~\ref{s:6} then focus on the deterministic setting.

\bigskip

\section{Preliminaries}\label{sec:2}

Throughout the paper, we use the notation $a\lesssim b$ if there exists a constant $c>0$ such that $a\leq cb$, and we write $a\simeq b$ if $a\lesssim b$ and $b\lesssim a$.

\subsection{Function spaces}
Given a Banach space $E$ with a norm $\|\cdot\|_E$ and $T>0$, we write $C_TE=C([0,T];E)$ for the space of continuous functions from $[0,T]$ to $E$, equipped with the supremum norm $\|f\|_{C_TE}=\sup_{t\in[0,T]}\|f(t)\|_{E}$. We also use  $C([0,\infty);E)$ to denote the space of continuous functions from $[0,\infty)$ to $E$.  For $p\in [1,\infty]$ we write $L^p_TE=L^p(0,T;E)$ for the space of $L^p$-integrable functions from $[0,T]$ to $E$, equipped with the usual $L^p$-norm. We also use $L^p_{\text{loc}}(\mathbb{R}^+;E)$ to denote the space of functions $f$ from $[0,\infty)$ to $E$ satisfying $f|_{[0,T]}\in L^p_TE$ for all $T>0$.  Similar notation is used for $C^\alpha_{\text{loc}}(\mathbb{R}^+;E)$. Set $L^{2}_{\sigma}=\{u\in L^2; \div u=0\}$. We denote by $L^{2}_{\rm{loc}}$ the space of locally $L^{2}$-integrable vector fields.
We use $(\Delta_{i})_{i\geq -1}$ to denote the Littlewood--Paley blocks corresponding to a dyadic partition of unity.
Besov spaces on the torus with general indices $\alpha\in \R$, $p,q\in[1,\infty]$ are defined as
the completion of $C^\infty_c(\mathbb{R}^{d})$ with respect to the norm
$$
\|u\|_{B^\alpha_{p,q}}:=\left(\sum_{j\geq-1}2^{j\alpha q}\|\Delta_ju\|_{L^p}^q\right)^{1/q},$$
where $C^\infty_c(\mathbb{R}^{d})$ means smooth functions on $\mR^d$ with compact support. Set $H^\alpha=B^\alpha_{2,2}$ for $\alpha\in\mR$.

Paraproducts were introduced by Bony in \cite{Bon81} and they permit to decompose a product of two distributions into three parts which behave differently in terms of regularity. More precisely, using the Littlewood-Paley blocks, the product $fg$ of two Schwartz distributions $f,g\in\mathcal{S}'(\R^{d})$ can be formally decomposed as
$$fg=f\prec g+f\circ g+f\succ g,$$
with $$f\prec g=g\succ f=\sum_{j\geq-1}\sum_{i<j-1}\Delta_if\Delta_jg, \quad f\circ g=\sum_{|i-j|\leq1}\Delta_if\Delta_jg.$$
Here, the paraproducts $\prec$ and $\succ$ are always well-defined and critical term is the resonant product denoted by $\circ$.
In general, it is only well-defined provided  the sum of the regularities of $f$ and $g$ in terms of Besov spaces is strictly positive.
Moreover, we have the following paraproduct estimates from \cite{Bon81} (see also \cite[Lemma~2.1]{GIP15}).

\begin{lemma}\label{lem:para}
	Let  $\beta\in\R$, $p, p_1, p_2, q\in [1,\infty]$ such that $\frac{1}{p}=\frac{1}{p_1}+\frac{1}{p_2}$. Then  it holds
	\begin{equation*}
		\|f\prec g\|_{B^\beta_{p,q}}\lesssim\|f\|_{L^{p_1}}\|g\|_{B^{\beta}_{p_2,q}},
	\end{equation*}
	and if $\alpha<0$ then
	\begin{equation*}
		\|f\prec g\|_{B^{\alpha+\beta}_{p,q}}\lesssim\|f\|_{B^{\alpha}_{p_1,q}}\|g\|_{B^{\beta}_{p_2,q}}.
	\end{equation*}
	If  $\alpha+\beta>0$ then it holds
	\begin{equation*}
		\|f\circ g\|_{B^{\alpha+\beta}_{p,q}}\lesssim\|f\|_{B^{\alpha}_{p_1,q}}\|g\|_{B^{\beta}_{p_2,q}}.
	\end{equation*}
\end{lemma}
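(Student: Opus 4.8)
The plan is to prove all three estimates by the standard Littlewood--Paley technique, reducing each to an $\ell^q$ bound on dyadic blocks via Young's convolution inequality for sequences. The two structural facts I would rely on throughout are the Fourier support properties of the blocks: $\Delta_j u$ has spectrum in an annulus of size $\sim 2^j$, while the low-frequency sum $S_{j-1}u=\sum_{i<j-1}\Delta_i u$ has spectrum in a ball of radius $\sim 2^{j-1}$. Combined with the fact that $\Delta_k$ only interacts with frequencies of comparable size, these determine which dyadic interactions survive.

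For the first estimate, I write $f\prec g=\sum_j S_{j-1}f\,\Delta_j g$ and observe that each summand has spectrum in an annulus $\sim 2^j$, so $\Delta_k(f\prec g)=\sum_{|j-k|\le N}\Delta_k(S_{j-1}f\,\Delta_j g)$ for a fixed $N$. Applying H\"older with $\frac1p=\frac1{p_1}+\frac1{p_2}$ together with the uniform bound $\|S_{j-1}f\|_{L^{p_1}}\lesssim\|f\|_{L^{p_1}}$ gives $\|\Delta_k(f\prec g)\|_{L^p}\lesssim\|f\|_{L^{p_1}}\sum_{|j-k|\le N}\|\Delta_j g\|_{L^{p_2}}$; multiplying by $2^{k\beta}$, using $2^{k\beta}\simeq 2^{j\beta}$ on the support, and taking the $\ell^q_k$-norm yields the claim. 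For the second estimate I keep the same decomposition but now exploit $\alpha<0$: the low-frequency factor is controlled by $\|S_{j-1}f\|_{L^{p_1}}\lesssim\sum_{i<j-1}2^{-i\alpha}\bigl(2^{i\alpha}\|\Delta_i f\|_{L^{p_1}}\bigr)\lesssim 2^{-j\alpha}\|f\|_{B^\alpha_{p_1,q}}$, where the geometric series converges precisely because $-\alpha>0$. Inserting this bound, using $2^{k(\alpha+\beta)}2^{-j\alpha}\simeq 2^{j\beta}$ on the support, and again Young's inequality closes the estimate.

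For the resonant term $f\circ g=\sum_j R_j$ with $R_j=\sum_{|i-j|\le1}\Delta_i f\,\Delta_j g$, the key change is that each $R_j$ now has spectrum in a \emph{ball} of radius $\sim 2^j$ rather than an annulus. Hence $\Delta_k$ picks up contributions from all high frequencies: $\Delta_k(f\circ g)=\sum_{j\ge k-N}\Delta_k R_j$. Bounding $\|R_j\|_{L^p}\lesssim\sum_{|i-j|\le1}\|\Delta_i f\|_{L^{p_1}}\|\Delta_j g\|_{L^{p_2}}$ and multiplying by $2^{k(\alpha+\beta)}$, I factor $2^{k(\alpha+\beta)}=2^{(k-j)(\alpha+\beta)}2^{j(\alpha+\beta)}$; writing $u_i=2^{i\alpha}\|\Delta_i f\|_{L^{p_1}}$ and $v_j=2^{j\beta}\|\Delta_j g\|_{L^{p_2}}$, the estimate becomes a discrete convolution of the sequence $m\mapsto 2^{m(\alpha+\beta)}\mathds 1_{m\le N}$ against $(u_i v_j)$. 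Here the hypothesis $\alpha+\beta>0$ is exactly what makes this kernel lie in $\ell^1$ over the unbounded range $j\ge k$, so Young's inequality gives $\|f\circ g\|_{B^{\alpha+\beta}_{p,q}}\lesssim\|u\|_{\ell^q}\|v\|_{\ell^q}=\|f\|_{B^\alpha_{p_1,q}}\|g\|_{B^\beta_{p_2,q}}$.

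The only genuinely delicate point, and the place I expect to spend the most care, is the resonant estimate: unlike the paraproducts, it involves an \emph{unbounded} sum over frequencies $j\ge k$, and convergence is not automatic. The positivity $\alpha+\beta>0$ enters precisely to render the geometric kernel summable, and the $\ell^q$ bookkeeping must be done via Young's convolution inequality, together with the elementary bound $\|(u_{j+l}v_j)_j\|_{\ell^q}\le\|u\|_{\ell^\infty}\|v\|_{\ell^q}\le\|u\|_{\ell^q}\|v\|_{\ell^q}$ for the finitely many near-diagonal shifts, rather than a naive triangle inequality which would lose the summability. The two paraproduct bounds, by contrast, are finite near-diagonal sums and are essentially immediate once the support localization is in place.
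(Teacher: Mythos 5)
Your proof is correct: the paper itself does not prove Lemma~\ref{lem:para} but simply cites \cite{Bon81} and \cite[Lemma~2.1]{GIP15}, and your argument is precisely the standard one behind those references --- spectral localization of $S_{j-1}f\,\Delta_j g$ in annuli (resp.\ of the resonant blocks in balls), H\"older, the bound $\|S_{j-1}f\|_{L^{p_1}}\lesssim 2^{-j\alpha}\|f\|_{B^{\alpha}_{p_1,\infty}}$ for $\alpha<0$, and Young's convolution inequality in $\ell^q$, with $\alpha+\beta>0$ making the geometric kernel $m\mapsto 2^{m(\alpha+\beta)}\mathds{1}_{m\leq N}$ summable over the unbounded range in the resonant term. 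Your identification of the resonant sum over $j\geq k-N$ as the only genuinely delicate point, handled via $\ell^1 * \ell^q \to \ell^q$ rather than a naive triangle inequality, is exactly right.
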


\subsection{Probabilistic elements}\label{s:p}
We will present the probabilistic extension of solutions in Section~\ref{sec:3.2} in a general framework. To this end, we introduce basics for the general stochastic forcing, which will be used there.
For a Hilbert space $\mU$, let $L_2(\mU,L^2_\sigma)$ be the space all Hilbert--Schmidt operators from $\mU$ to $L^2_\sigma$ with the norm $\|\cdot\|_{L_2(\mU,L^2_\sigma)}$. Let $G: L^2_\sigma\rightarrow L_2(\mU,L^2_\sigma)$ be $\mathcal{B}(L^2_\sigma)/\mathcal{B}(L_2(\mU,L^2_\sigma))$ measurable.
In the following, we assume
$$\|G(x)\|_{L_2(\mU,L_{\sigma}^2)}\leq C(1+\|x\|_{L^2}),$$
for every $x\in C_c^\infty\cap L^2_{\sigma}$ and if in addition $y_n\rightarrow y$ in $L^2_{\text{loc}}$ then we require
$$\lim_{n\rightarrow \infty}\|G(y_n)^*x-G(y)^*x\|_{\mU}=0,$$
where the asterisk denotes the adjoint operator.

Suppose there is another  Hilbert space $\mU_1$ such that the embedding  $\mU\subset  \mU_1$ is Hilbert--Schmidt. We also use $H^{-3}_{\text{loc}}$ to denote the space of distributions  equipped with THE topology given by the seminorms
$$\|g\|_{H^{-3}_{R}}=\sup\{\<g,v\>, v\in C^\infty_c, \|v\|_{H^3}\leq 1, \supp v\subset B_R\},\quad 0<R<\infty,$$
with $B_R=\{x:|x|<R\}$.
 Let  $\Omega_0:=C([0,\infty);H^{-3}_{\text{loc}}\times \mU_1)\cap L^2_{\mathrm{loc}}([0,\infty);L^2_\sigma\times \mU_1)$ and let $\mathscr{P}({\Omega}_0)$ denote the set of all probability measures on $({\Omega}_0,{\mathcal{B}})$ with ${\mathcal{B}}$   being the Borel $\sigma$-algebra coming from the topology of locally uniform convergence on $\Omega_0$. Let  $(x,y):{\Omega_0}\rightarrow C([0,\infty);H_{\rm loc}^{-3}\times \mU_{1})$ denote the canonical process on ${\Omega}_0$ given by
$$(x_t(\omega),y_t(\omega))=\omega(t).$$
For $t\geq 0$ we define   $\sigma$-algebra ${\mathcal{B}}^{t}=\sigma\{ (x(s),y(s)),s\geq t\}$.
Finally,  we define the canonical filtration  ${\mathcal{B}}_t^0:=\sigma\{ (x(s),y(s)),s\leq t\}$, $t\geq0$, as well as its right-continuous version ${\mathcal{B}}_t:=\cap_{s>t}{\mathcal{B}}^0_s$, $t\geq 0$.

\subsection{Useful results from \cite{ABC22}}\label{s:2.3}

A recent breakthrough result from \cite{ABC22} is the non-uniqueness of Leray--Hopf solutions to the following forced Navier--Stokes system:
\begin{align}\label{eq:fns}
\partial_t u&=\Delta u+\bar f- u\cdot \nabla u+\nabla p,\quad \div u=0,
	\\u(0)&=0.\nonumber
\end{align}
More precisely, there exist $T>0$ and $\bar f\in L^1(0,T;L^2)$ and two distinct Leray--Hopf solutions on $[0,T]\times \mR^3$ to the Navier--Stokes system \eqref{eq:fns} with force $\bar f$ and initial data $0$.

\rmk{Before proceeding, we recall the main ideas of the proof in \cite{ABC22}, which also serves as the basis for construction of non-unique solutions to the random PDE \eqref{eql1} in Section \ref{sec:loc} below. In \cite{ABC22}, the authors first considered the Navier--Stokes equations in similarity variables $(\xi,\tau)$, and identified a pair of a steady-state equilibrium and a force $(\bar{U}, \bar{F})$ such that, when the equations are linearized around $\bar{U}$, the resulting linear operator $L_{ss}$ (see \eqref{def:Lss} below) possesses an unstable eigenvalue. They calculated the decay rate of $U_{lin}$ (see \eqref{r:Ulin}), which is a solution to the linear PDE $\partial_\tau U = L_{ss}U$. Subsequently, they constructed a non-trivial trajectory $U^{per}$ on the unstable manifold associated with the most unstable eigenvalue. This led them to conclude that $\bar{u}$ and $\bar{u} + u^{lin} + u^{per}$, expressed in physical variables, are two Leray--Hopf solutions to the original Navier--Stokes equations with force $\bar{f}$.}

To be more precise, we  recall  the following similarity variables from \cite{ABC22}:
\begin{align}
	\xi=\frac{x}{\sqrt t}&,\quad \tau=\log t,\no
	\\ u(t,x)=\frac1{\sqrt t}U(\tau,\xi),&\quad \bar f(t,x)=\frac1{t^{3/2}}\bar F(\tau,\xi). \label{tr:1}
\end{align}
In these variables, the forced Navier--Stokes system \eqref{eq:fns} becomes
\begin{align}\label{e:U}
	\partial_\tau U-\frac12(1+\xi\cdot\nabla)U-\Delta U+U\cdot \nabla U+\nabla P=\bar F,\quad \div U=0.
\end{align}

Suppose that $\bar U$ is the linearly unstable solution for the dynamics of \eqref{e:U} obtained in \cite[Theorem 1.3]{ABC22}, that is, there exists an unstable eigenvalue for the linearized operator $L_{ss}$ defined by
\begin{align}\label{def:Lss}
	-L_{ss}U=-\frac12(1+\xi\cdot\nabla)U-\Delta U+\mathbb{P}(\bar U\cdot \nabla U+U\cdot \nabla \bar U),
\end{align}
where $\mathbb{P}$ is the Leray projector. Set
\begin{align}
	\bar F:=-\frac12(1+\xi\cdot\nabla)\bar U-\Delta \bar U+\bar U\cdot \nabla \bar U.
\end{align}
Using \cite[Theorem 1.3]{ABC22} we know that $\bar u(t,x)=\frac1{\sqrt{t}}\bar U(\xi)$ is a Leray--Hopf solution to the forced Navier--Stokes equations \eqref{eq:fns} with $\bar f(t,x)=\frac1{t^{3/2}}\bar F(\tau,\xi)$.
By \cite[Theorem 4.1]{ABC22}, the linear operator $L_{ss}:D(L_{ss})\subset L^2_\sigma\to L^2_\sigma$ with $D(L_{ss}):=\{U\in L^2_\sigma:U\in H^2(\mR^3),\xi\cdot \nabla U\in L^2(\mR^3)\}$, has an unstable eigenvalue $\lambda$. Then $\lambda$ can be chosen to be maximally unstable, that is
 \begin{align}\label{def:a}
a:=	\text{Re}\lambda=\sup_{z\in \sigma(L_{ss})}\text{Re} z>0,
\end{align}
with $\sigma(L_{ss})$ being the spectrum of the operator $L_{ss}$.
Let  $\eta\in H^k(\mR^3)$ be a  non-trivial smooth eigenfunction for all $k\geq0$. Define
\begin{align}\label{def:Ulin}
	U^{lin}(\tau):=\text{Re}(e^{\lambda\tau} \eta),\end{align}
which is a solution to the linearized PDE
$$\partial_\tau U^{lin}=L_{ss}U^{lin}.$$
We also recall from \cite[(4.16)]{ABC22} that
\begin{align}\label{r:Ulin}
	\|U^{lin}\|_{H^k}=C(k)e^{a\tau},\quad \tau\in\mR, \ k\in\mN.
\end{align}

We also recall the following result from \cite[Lemma 4.4]{ABC22}, which will be used frequently in the sequel.

\bl\label{lem:Lss} For any $\sigma_2\geq \sigma_1\geq0$ and $\delta>0$, it holds
\begin{align*}
	\|e^{\tau L_{ss}}U_0\|_{H^{\sigma_2}}\lesssim \tau^{-(\sigma_2-\sigma_1)/2}e^{\tau(a+\delta)}\|U_0\|_{H^{\sigma_1}},\quad \tau>0,
\end{align*}
for any $U_0\in L^2_\sigma\cap H^{\sigma_1}$. \rmk{Here the implicit constant depends on $\sigma_1,\sigma_2,\delta$.}
\el

\rmk{In the following, we extend the approach of \cite{ABC22} to the stochastic case. Specifically, we first transform the stochastically forced Navier--Stokes system into a forced Navier--Stokes system with random viscosity. In this setting, we also use the steady state $\bar U$ and the linear operator as in \cite{ABC22}. Since the equation now contains a stochastic term, the new forcing term ($\bar H$ below) also becomes random. Following the method in \cite{ABC22}, we aim to construct a non-trivial trajectory $U^{per}$ on the unstable manifold. To achieve this, we treat the random viscosity term as a perturbation and apply a fixed-point argument in a suitable Besov space.}

\section{Linear multiplicative noise}\label{s:3}

In this section, we  prove non-uniqueness of Leray--Hopf solutions to the following Navier--Stokes equations driven by linear multiplicative noise
\begin{align} \label{eql}\dif u+\div (u\otimes u)\dif t+\nabla p\dif t=\Delta u\dif t+u\dif W+f\dif t,\quad \div u=0,
\end{align}
where $W$ is a one-dimensional Brownian motion on stochastic basis $(\Omega,\cF,(\cF_t)_{t\geq0},\bP)$ and  $(\mathcal{F}_t)_{t\geq0}$ is the normal filtration generated by $W$, that is, the  canonical right-continuous filtration augmented by all the $\mathbf{P}$-negligible sets.

\subsection{Construction of non-unique local-in-time solutions}\label{sec:loc}

In the first step, we introduce a new variable $v$ so that the stochastic forced Navier--Stokes system \eqref{eql} rewrites as a forced Navier--Stokes system with a random and time dependent viscosity. This transformation is different from what is usually used in case of linear multiplicative noise, see e.g. \cite{HZZ19}.
In particular, we  define
\begin{align}\label{tr}
	w(t):=u(t)e^{-(W(t)-t/2)}.
	\end{align}
By It\^o's formula we find that
$$\dif e^{-(W(t)-t/2)}=-e^{-(W(t)-t/2)}\dif W+e^{-(W(t)-t/2)}\dif t,$$
which implies that
$$\dif w=e^{-(W(t)-t/2)}\dif u +u \dif e^{-(W(t)-t/2)}+\dif \langle u, e^{-(W(t)-t/2)}\rangle.$$
Thus $w$ solves
$$\p_tw+e^{W-t/2}\div (w\otimes w)-\Delta w+e^{-(W-t/2)}\nabla p=e^{-(W-t/2)}f,\quad \div w=0.$$
In this equation, we already eliminated the stochastic integral but the factor in front of the nonlinear term is not so convenient. \rmk{To this end, we define  the time rescaling of $w$:
\begin{align}\label{tr1}
	 w(t)=v\Big(\int_0^te^{W(s)-s/2}\dif s\Big)=:(v\circ \theta)(t).
	\end{align}}
We have $$\partial_tw(t)=(\partial_tv(\theta(t)))e^{W(t)-t/2}$$
which leads to $$\partial_tv(\theta(t))=e^{-W(t)+t/2}\partial_tw(t)=-\div (w\otimes w)-e^{-W(t)+t/2}\Delta w+e^{-2W(t)+t}\nabla p+e^{-2W(t)+t}f.$$
We observe that $\theta$ is  continuous and strictly increasing and so there exists an inverse $\theta^{-1}:\mR^+\to\mR^+$. Thus, we obtain
 that $v$ satisfies
\begin{align} \label{eql1}
\partial_tv+\div (v\otimes v)+\nabla \pi =h(\theta^{-1}(t))\Delta v+g,\quad \div v=0,
\end{align}
where
$$h(t)=e^{-W(t)+t/2},\quad g(t)=h^2(\theta^{-1}(t))f(\theta^{-1}(t)),\quad \pi(t)=h^2(\theta^{-1}(t))p(\theta^{-1}(t)).$$
In other words, the random and time dependent factor now  appears in \eqref{eql1} in place of viscosity. This is very helpful because the dissipative term will be treated as a perturbation.

However, due to the the above random time change we need to carefully trace adaptedness, in order to guarantee that the final solutions to \eqref{eql} are adapted to $(\mathcal{F}_{t})_{t\geq0}$. This turns out to be rather subtle. To this end, we first adjust the filtration and show several auxiliary results on adaptedness of stochastic processes and on stopping times. We also emphasize that the adaptedness to $(\mathcal{F}_{t})_{t\geq0}$ is important for the extension of the solutions in a probabilistic sense (see Section \ref{sec:3.2}),  which stands out as one of the main distinctions in our proof compared to deterministic calculations.

\bl\label{lem:1}
 For every $s\geq0$, $\theta^{-1}(s)$ is an $(\cF_t)$-stopping time. Define
$$\hat{\cF}_t:=\cF_{\theta^{-1}(t)}:=\{A\in \cF;\,A\cap \{\theta^{-1}(t)< s\}\in\cF_{s} \text{ for all }s\geq 0\},\quad t\geq0.$$
Then $(\hat{\cF}_t)_{t\geq0}$ is  a right-continuous filtration. Moreover, if a stochastic process $X$ is  $(\mathcal{F}_{t})$-progressively measurable then $X\circ\theta^{-1}$ is $(\hat{\cF}_t)$-adapted.
\el

\begin{proof}
	For any $s,t\geq0$, $\{\theta^{-1}(s)\leq t\}=\{\theta(t)\geq s\}\in \cF_t$. Hence, the first result follows. Since $\{\theta^{-1}_t,t\geq0\}$ is a family of increasing $(\cF_t)$-stopping times, \rmk{by \cite[Lemma 2.15]{KS}}  we know that the $\sigma$-algebras $\hat\cF_t$, $t\geq 0$, are also increasing, hence they build a filtration.
	For $A\in \cap_{n=1}^\infty \hat\cF_{t+\frac1n}$, by the continuity and monotonicity of $\theta^{-1}$ we have for any $u\geq0$
	$$A\cap \{\theta^{-1}(t)< u\}=\bigcup_{n}A\cap \big\{\theta^{-1}(t+\frac1n)< u\big\}\in \cF_u.$$
	Hence, $A\in \hat\cF_{t}$ and $(\hat{\cF}_t)_{t\geq0}$ is then a right-continuous filtration. The last result follows by a well-known result for stopping times \rmk{(c.f. \cite[Proposition 2.18]{KS})}.
\end{proof}

\bl\label{lem:2} Suppose that $v$ is an $(\hat{\mathcal{F}}_{t})$-adapted $H^{\gamma}$-valued stochastic process with $\mathbf{P}$-a.s. continuous trajectories for some $\gamma\in \R$. Then $v\circ\theta $ is $(\mathcal{F}_{t})$-adapted.
\el

\begin{proof}
	Using the continuity of $v$ with respect to  $t$, we find
	\begin{align*}
		v(\theta(t))=\lim_{n\to\infty}\sum_{k=0}^\infty v\Big(\frac{k}{2^n}\Big)\1_{\{\frac{k}{2^n}\leq \theta(t)<\frac{k+1}{2^n}\}}.
	\end{align*}
Now it suffices to prove that $v(\frac{k}{2^n})\1_{\{\frac{k}{2^n}\leq \theta(t)<\frac{k+1}{2^n}\}}\in \cF_t$.
	For any open set $A\subset H^{{\gamma}}$ \rmk{not including zero}
	\begin{align*}
		\Big\{v\Big(\frac{k}{2^n}\Big)\1_{\{\frac{k}{2^n}\leq \theta(t)<\frac{k+1}{2^n}\}}\in A\Big\}
		&=\Big\{v\Big(\frac{k}{2^n}\Big)\in A\Big\}\cap \Big\{\frac{k}{2^n}\leq\theta(t)<\frac{k+1}{2^n}\Big\}
		\\&=\Big\{v\Big(\frac{k}{2^n}\Big)\in A\Big\}\cap \Big\{\theta^{-1}\Big(\frac{k}{2^n}\Big)\leq t\Big\}\cap \Big\{\theta(t)<\frac{k+1}{2^n}\Big\}\in \cF_t,
	\end{align*}
where we used that $\{v(\frac{k}{2^n})\in A\}\in \hat\cF_{\frac{k}{2^n}}=\cF_{\theta^{-1}(\frac{k}{2^n})}$. \rmk{For any open set $A$ including zero,
\begin{align*}&\Big\{v\Big(\frac{k}{2^n}\Big)\1_{\{\frac{k}{2^n}\leq \theta(t)<\frac{k+1}{2^n}\}}\in A\Big\}
\\=&\Big(\Big\{v\Big(\frac{k}{2^n}\Big)\in A\Big\}\cap \Big\{\frac{k}{2^n}\leq\theta(t)<\frac{k+1}{2^n}\Big\}\Big) \cup \Big\{\frac{k}{2^n}\leq\theta(t)<\frac{k+1}{2^n}\Big\}^c\in \cF_t. \end{align*}}
Hence, the result follows.
\end{proof}

We also prove the following results for stopping time.

\bl\label{lem:3} Suppose that $T$ is an $(\hat\cF_t)$-stopping time. Then $\theta^{-1}\circ T$ is an $(\cF_t)$-stopping time.
\el
\begin{proof}
	We have
	\begin{align*}\{\theta^{-1}\circ T< t\}=\{T< \theta(t)\}&=\bigcup_{s\in\mQ}\{T< s\}\cap\{s< \theta(t)\}
		=\bigcup_{s\in\mQ}\{T< s\}\cap\{\theta^{-1}(s)< t\}\in \cF_t.\end{align*}
\end{proof}

 By Lemma \ref{lem:1} we find that the random coefficient $h\circ\theta^{-1} $ in  \eqref{eql1} is $(\hat\cF_t)$-adapted. Our aim is to find two $(\hat \cF_t)$-adapted Leray--Hopf solutions to   \eqref{eql1} with the same $(\hat \cF_t)$-adapted force $g$  before some strictly positive $(\hat \cF_t)$-stopping time $\hat T$. Then using Lemma \ref{lem:2} and Lemma \ref{lem:3} and the transform in \eqref{tr}, we construct two $( \cF_t)$-adapted Leray--Hopf solutions to   \eqref{eql} with the same $( \cF_t)$-adapted force $f$  before some strictly positive $( \cF_t)$-stopping time $T_{0}=\theta^{-1}\circ \hat T$.

As the next step,  we concentrate on \eqref{eql1}, and find the force $g$ which gives raise to two Leray--Hopf solutions. The transform in the similarity variables as in \eqref{tr:1}, i.e.
\begin{align}\label{tr2}
	v(t,x)=\frac1{\sqrt t}U(\tau,\xi), \quad g(t,x)=\frac1{t^{3/2}}{H}(\tau,\xi),
\end{align}
leads to
\begin{align} \label{eql2}\partial_\tau U-\frac12(1+\xi\cdot\nabla)U+U\cdot \nabla U+\nabla P=h(\theta^{-1}(e^\tau))\Delta U+{H},\quad \div U=0.
\end{align}
This has a similar structure as \eqref{e:U} except for the viscosity.
Then the background solution $\bar U$ defined   in Section \ref{sec:2} based on the construction from  \cite[Theorem 1.3]{ABC22} is  a solution to \eqref{eql2} with $H=\bar H$ given by
 $$ \bar H=-\frac12(1+\xi\cdot\nabla)\bar U+\bar U\cdot \nabla \bar U-h(\theta^{-1}(e^\tau))\Delta \bar U.$$
As $\bar U$ is deterministic, using Lemma \ref{lem:1} and the transform \eqref{tr2} it follows that
\begin{align}\label{def:g}
	 g(t,x)=\frac1{t^{3/2}}\bar H(\tau,\xi)
\end{align} is $(\hat{\cF}_t)$-adapted. Hence, using Lemma \ref{lem:2}
\begin{align}\label{def:f}
	f(t,x):=h(t,x)^{-2}g(\theta(t))
\end{align}
is $({\cF}_t)$-adapted, where $h(t,x)^{-2}=1/h(t,x)^2$. Note, however, that here  $g$ is not continuous at $t=0$. But we can still apply Lemma \ref{lem:2} to the function $\tilde{g}(t,x)=t^{3/2}g(t,x)$ which is continuous to deduce that $\tilde{g}(\theta)$ is  $({\cF}_t)$-adapted. If $t=0$ then $f$ is deterministic and hence measurable with respect to  $\cF_0$, which implies the adaptedness of $f$.

In the following, we fix the above $H=\bar H$, which belongs to  $C([\tau_1,\tau_2];L^2)$ for any $\tau_1,\tau_2\in\mR $ $\bP$-a.s.  and we aim to construct the second solution to \eqref{eql2}.  Similarly to  \cite{ABC22}, we make use of the following ansatz for the second solution
\begin{align}\label{defU}
	U=\bar U+U^{lin}+U^{per},
	\end{align}
with $\bar U$ and $U^{lin}$ as in Section \ref{sec:2} and $U^{per}$ solves the following equation
\begin{align}\label{eqper1}
&\partial_\tau U^{per}-L_{ss}U^{per}+\mathbb{P}\Big(U^{lin}\cdot \nabla  U^{per}+U^{per}\cdot \nabla U^{lin}+U^{lin}\cdot \nabla U^{lin}+U^{per}\cdot \nabla U^{per}\Big)\nonumber\\
	&=(h(\theta^{-1}(e^\tau))-1)\Delta (U^{per}+U^{lin}).
\end{align}
Here $L_{ss}$ was defined in \eqref{def:Lss}. For $U^{per}$, we additionally require a suitable decay estimate which compared to \eqref{r:Ulin} implies that $U^{lin}\neq -U^{per}$ and consequently $U\neq \bar U$ leading to non-uniqueness.

Compared to \cite{ABC22}, the only difference in \eqref{eqper1} comes from the right hand side. The idea  is to view it as a perturbation of the equation. To obtain the necessary estimates, we use the Besov space $B^N_{2,\infty}$ (to replace $H^N$ in \cite{ABC22}) in the definition of the following Banach space $X$, i.e. for some $\eps>0$ and $N>5/2$, $N\in\mathbb{N}$ we let
$$X:=\{U\in C((-\infty,T];B^N_{2,\infty}): \|U\|_X<\infty\},$$
with the norm $$\|U\|_X:=\sup_{\tau<T}e^{-(a+\eps)\tau}\|U(\tau)\|_{B^N_{2,\infty}}.$$

By the time change in \eqref{tr:1} we also need to define the following filtration $\cG_\tau:=\hat \cF_{t}$ for $\tau=\log t\in \mR$.
Since the time change is deterministic, it holds  that $v$ is $(\hat \cF_t)$-adapted if and only if $U$ is $( \cG_\tau)$-adapted for $v$, $U$  in \eqref{tr2}. Furthermore,  $T$ is a  $( \cG_\tau)$-stopping time if and only if  $e^T$ is a $(\hat \cF_t)$-stopping time. Or in other words,  $T$ is a $(\hat \cF_t)$-stopping time if and only if  $\log T$ is a $( \cG_\tau)$-stopping time.

In the following, we consider \eqref{eqper1} with the  random coefficient $h(\theta^{-1}(e^\tau))-1$ adapted to the filtration $(\cG_\tau)_{\tau\in\R}$ and we want to find one solution adapted to the same filtration. \rmk{To this end, we first prove the following  bound for the operator $L_{ss}$.}

\bl\label{lem:im}
For $N\in\mathbb{N}$, $j\geq 0$,  $0<\tau<2$ there exists a constant $c>0$ such that
$$
\|\Delta_j \nabla^{N+2}e^{\tau L_{ss}}U_0\|_{L^2}\lesssim 2^{2j}e^{-c2^{2j}\tau}\|U_0\|_{B^N_{2,\infty}}+\|U_0\|_{L^2},
$$
for any $U_0\in B^N_{2,\infty}$.
\el

\begin{proof}
 We use a similar transform as in the proof of \cite[Lemma 4.4]{ABC22}, i.e. we set \rmk{$G(\tau)=e^{\tau L_{ss}}U_0$} and
	$$u(t,x)=\frac1{\sqrt{t+1}}\rmk{G}\Big(\log(t+1),\frac{x}{\sqrt{t+1}}\Big),\quad \bar u(t,x)=\frac1{\sqrt{t+1}}\bar U\Big( \frac{x}{\sqrt{t+1}}\Big).$$
This leads to
	$$\partial_t u-\Delta u=-\mathbb{P}(\bar u\cdot \nabla u+u\cdot \nabla \bar u),\quad u(0)=U_0.$$
	Since $\bar U$ is smooth, we obtain that $\bar u$ is also smooth and in this transform $t=e^\tau-1\simeq \tau$ when $\tau\in(0,2)$. By Lemma \ref{lem:Lss} we know that
	\begin{align}\label{ul2}\sup_{t\in (0,e^2-1]}\|u(t)\|_{L^2}\leq C\|U_0\|_{L^2}.\end{align}
	By the Duhamel formula, the paraproduct decomposition with implicit summation over $i=1,2,3$ and using the smoothness of $\bar u$, we obtain for $t\in (0,e^2-1)$
	\begin{align}\label{estg}&\|\Delta_j \nabla^N u(t)\|_{L^2}\no
		\\&\lesssim e^{-2^{2j}t}\|\Delta_j \nabla^NU_{0}\|_{L^2}+\int_0^te^{-2^{2j}(t-s)}\|\nabla^{N}\Delta_j(\bar u^{i} \prec \partial_{i} u +\bar u^{i} \succ \partial_{i} u +\bar u^{i} \circ \partial_{i} u)\|_{L^2}\dif s \no\\
		&\qquad\qquad+\int_0^te^{-2^{2j}(t-s)}\|\nabla^{N}\Delta_j( u^{i} \prec \partial_{i}\bar u + u^{i} \succ \partial_{i}\bar u + u^{i} \circ \partial_{i} \bar u)\|_{L^2}\dif s \no
		\\&\lesssim e^{-2^{2j}t}\|\Delta_j \nabla^NU_{0}\|_{L^2}+\sum_{l\sim j}\int_0^te^{-2^{2j}(t-s)}2^{2j}\|\nabla^{N-1}\Delta_l u\|_{L^2}\dif s+\sup_{t\in(0,e^2-1)}\|u(t)\|_{L^2}.
	\end{align}
\rmk{Here and in the following $l\sim j$ means that there exists a universal constant $c>0$ such that $|l-j|\leq c$.}
Indeed, in the last step we used paraproduct estimates Lemma \ref{lem:para}, smoothness of $\bar u$ and
	 $$\|\Delta_j\nabla^{N}(\bar u\prec \nabla u)\|_{L^2}\lesssim\sum_{l\sim j}(\|\nabla^{N+1}\Delta_l u\|_{L^2}+\|\nabla\Delta_l u\|_{L^2})\lesssim 2^{2j}\sum_{l\sim j}\|\nabla^{N-1}\Delta_l u\|_{L^2}+2^j\| u\|_{L^2}, $$
	  $$\|\Delta_j\nabla^{N}(\nabla\bar u\prec  u)\|_{L^2}\lesssim\sum_{l\sim j}(\|\nabla^{N}\Delta_l u\|_{L^2}+\|\Delta_l u\|_{L^2})\lesssim 2^{j}\sum_{l\sim j}\|\nabla^{N-1}\Delta_l u\|_{L^2}+\| u\|_{L^2}, $$
	   and we used
 $\sup_{t\in(0,e^2-1)}\|u(t)\|_{L^2}$  to directly control the remaining terms  as $\bar u$ is smooth.
	
	Choosing $N=1$ in \eqref{estg}, the second term on the right hand side gives $\|u\|_{L^2}$ hence  we obtain for $t\in (0,e^2-1)$
	\begin{align}\label{eqest1}\|\Delta_j \nabla u(t)\|_{L^2}&\lesssim e^{-2^{2j}t}\|\Delta_j \nabla U_0\|_{L^2}+\|U_0\|_{L^2}.
	\end{align}
\rmk{Here we used \eqref{ul2} to bound the second and the last term.}
	Choosing $N=2$ in  \eqref{estg}, we apply \eqref{eqest1} to control the second term and obtain for $t\in (0,e^2-1)$
	\begin{align*}\|\Delta_j \nabla^2 u(t)\|_{L^2}&\lesssim e^{-2^{2j}t}\|\Delta_j \nabla^2 U_0\|_{L^2}+\int_0^te^{-2^{2j}(t-s)}e^{-2^{2j}s}2^{2j}\dif s\sum_{l\sim j}\|\Delta_l \nabla U_0\|_{L^2}+\|U_0\|_{L^2}
		\\&\lesssim e^{-2^{2j}t}\|\Delta_j \nabla^2 U_0\|_{L^2}+e^{-c2^{2j}t}\sum_{l\sim j}\|\Delta_l \nabla U_0\|_{L^2}+\|U_0\|_{L^2}.
	\end{align*}
	For a general $N\in\N$, we iterate the above argument to have for $t\in (0,e^2-1)$
	\begin{align*}\|\Delta_j \nabla^N u(t)\|_{L^2}&\lesssim e^{-2^{2j}t}\|\Delta_j \nabla^N U_0\|_{L^2}+\sum_{l\sim j}\int_0^te^{-c2^{2j}t}2^{2j}\dif s\|\Delta_l \nabla^{N-1} U_0\|_{L^2}+\|U_0\|_{L^2}
		\\&\lesssim e^{-2^{2j}t}\|\Delta_j \nabla^N U_0\|_{L^2}+\sum_{l\sim j}e^{-c2^{2j}t}\|\Delta_l \nabla^{N-1} U_0\|_{L^2}+\|U_0\|_{L^2},
	\end{align*}
	which implies the desired result.
\end{proof}

\bl\label{lem:per} For an integer $N>5/2$, there exist $\eps>0$ and a $(\cG_{\tau})$-stopping time $T=\tau_0 \wedge \log\tau_R \in\mR$ \rmk{with $\tau_0$ given in \eqref{tau} below} and a $(\cG_\tau)$-adapted stochastic process $U^{per}\in C((-\infty,T];B^{N}_{2,\infty})$, which is a solution to \eqref{eqper1} and satisfies \rmk{for $k<N$
$$\|U^{per}(\cdot,\tau)\|_{H^k}\lesssim\|U^{per}(\cdot,\tau)\|_{{B^N_{2,\infty}}}\leq e^{(a+\eps)\tau}, \quad \tau\leq T.$$}
Here $\tau_R$ and $\tau_0$ are given in the proof.
\el
\begin{proof} Choose $0<\eps<\frac14$.
	Define the following $(\hat\cF_t)$-stopping time for  an arbitrary $R>0$ $$\tau_R:=\tau_R^1\wedge \tau_R^2,$$
	$$\tau_R^1:=\inf\{t>0, \|W_{\theta^{-1}}\|_{C_t^{1/4}}\geq R\},\qquad \tau_R^2:=\inf\{t>0, |\theta^{-1}(t)|\geq R\}.$$
In the following we consider the time $\tau\leq \log\tau_R$ and intend to apply fix point argument in a small ball of the Banach space $X$ as in \cite{ABC22}.
The mild formulation of   \eqref{eqper1} reads as
\begin{align*}
	U^{per}=\int_{-\infty}^\tau e^{(\tau-s)L_{ss}}(I_1(s)+I_2(s))\dif s,
\end{align*}
with
\begin{equation}\label{de:I}
	\aligned
	I_1&:=-\mathbb{P}\Big(U^{lin}\cdot \nabla  U^{per}+U^{per}\cdot \nabla U^{lin}+U^{lin}\cdot \nabla U^{lin}+U^{per}\cdot \nabla U^{per}\Big),
	\\I_2&:=(h(\theta^{-1}(e^s))-1)\Delta (U^{per}+U^{lin}).
	\endaligned
\end{equation}
We shall particularly focus on $I_2$ which comes from the right hand side of \eqref{eqper1}, whereas $I_{1}$ will be bounded  below using the estimates from \cite[Section~4.2]{ABC22}.
	
	For the second term   in $I_2$ we apply Lemma \ref{lem:Lss} and \eqref{r:Ulin}  to have for $0<\delta<\eps$
	\begin{align*}&\bigg\|\int_{-\infty}^\tau e^{(\tau-s)L_{ss}}(h(\theta^{-1}(e^s))-1)\Delta U^{lin}\dif s\bigg\|_{B^N_{2,\infty}}
		\\&\lesssim\int_{-\infty}^\tau e^{(\tau-s)(a+\delta)}e^{\frac14s}\|U^{lin}(s)\|_{H^{N+2}}\dif s\lesssim\int_{-\infty}^\tau e^{(\tau-s)(a+\delta)}e^{as+\frac14s}\dif s\lesssim e^{a\tau+\frac14\tau}.
	\end{align*}
	Here, we used the definition of the stopping time $\tau_{R}$ to get
	\begin{align}\label{estb}
	|h(\theta^{-1}(e^s))-1|\lesssim_{R} \Big|\frac{\theta^{-1}(e^s)}2-W_{\theta^{-1}(e^s)}\Big|\lesssim_{R} |\theta^{-1}(e^s)|^{1/4}\lesssim_{R} e^{s/4}, \quad e^s\leq \tau_R,
	\end{align}
since  $\theta^{-1}$ has bounded derivatives before $\tau_R$, which was used in the last step.

Next,  we concentrate on the first term in $I_{2}$.
 By Lemma \ref{lem:im}, Lemma \ref{lem:Lss} and \eqref{estb} for $N>5/2, j\geq0$
	\begin{align*}
	&2^{jN}\bigg\|\Delta_j\int_{-\infty}^\tau e^{(\tau-s)L_{ss}}(h(\theta^{-1}(e^s))-1)\Delta U^{per}\dif s\bigg\|_{L^2}
		\\&\lesssim\bigg\|\Delta_j\nabla^N\int_{-\infty}^{\tau} e^{(\tau-s)L_{ss}}(h(\theta^{-1}(e^s))-1)\Delta U^{per}\dif s\bigg\|_{L^2}
		\\&\lesssim\int_{-\infty}^{\tau-2} e^{(\tau-s)(a+\delta)}e^{(a+\eps)s+\frac14s}\dif s\|U^{per}\|_X+\int_{\tau-2}^{\tau} e^{(a+\eps)s+\frac14s}\Big(2^{2j}e^{-2^{2j}(\tau-s)}+1\Big)\dif s\|U^{per}\|_X
		\\&\lesssim e^{(a+\eps)\tau+\frac14\tau}\|U^{per}\|_X,
	\end{align*}
where $(\Delta_j)_{j\geq-1}$ denotes the Littlewood--Paley blocks corresponding to a dyadic partition of unity and we used Lemma \ref{lem:Lss} for $s\in (-\infty,\tau-2]$ and Lemma \ref{lem:im} for $s\in [\tau-2,\tau]$. For $j=-1$ we directly apply Lemma \ref{lem:Lss}.
	Hence, we derive
	\begin{align*}
	&\Big\|\int_{-\infty}^\tau e^{(\tau-s)L_{ss}}(h(\theta^{-1}(e^s))-1)\Delta U^{per}\dif s\Big\|_{B^N_{2,\infty}}\lesssim e^{(a+\eps)\tau+\frac14\tau}\|U^{per}\|_X.
	\end{align*}
Combining the above calculation we obtain
\begin{align*}
	\Big\|\int_{-\infty}^\tau e^{(\tau-s)L_{ss}}I_2(s)\dif s\Big\|_{B^N_{2,\infty}}
	\lesssim e^{(a+\eps)\tau+\frac14\tau}\|U^{per}\|_X+e^{a\tau+\frac14\tau}.
\end{align*}

As mentioned above, we apply the approach of  \cite[Section 4.2]{ABC22} to control $I_{1}$ as follows
\begin{align*}
	\Big\|\int_{-\infty}^\cdot e^{(\cdot-s)L_{ss}}I_1(s)\dif s\Big\|_{X}
	\lesssim e^{(a+\eps)T}\|U^{per}\|_X^2+e^{(a-\eps)T}+e^{aT}\|U^{per}\|_X.
\end{align*}

Altogether, this leads to
\begin{align*}
	&\Big\|\int_{-\infty}^\cdot e^{(\cdot-s)L_{ss}}(I_1(s)+I_2(s))\dif s\Big\|_{X}
	\\&\leq C\Big(e^{(a+\eps)T}\|U^{per}\|_X^2+e^{(a-\eps)T}+e^{aT}\|U^{per}\|_X\Big)+C\Big(e^{(\frac14-\eps)T}+e^{\frac14T}\|U^{per}\|_X\Big).
\end{align*}
	We choose a deterministic $\tau_0$ very negative such that
\begin{align}\label{tau}2C(e^{(a+\eps)\tau_0}+e^{(a-\eps)\tau_0}+e^{a\tau_0})+C(e^{(\frac14-\eps)\tau_0}+e^{\frac14\tau_0})\leq 1/2.\end{align} Then it is standard  to apply the fix point argument as in \cite{ABC22} to find the desired solution in $X$ with $T=\tau_0\wedge \log \tau_R$.
\end{proof}

Going back to the Navier--Stokes equations in the physical variables \eqref{eql}, we deduce the following.

\bt\label{th:2}
There exist an $(\mathcal{F}_t)$-stopping time $T_0>0$ and an $(\cF_t)$-adapted $f\in L^1(0,T_0;L^2)$ $\bP$-a.s. such that there exists two  distinct $(\mathcal{F}_t)$-adapted Leray--Hopf solutions in $L^\infty(0,T_0;L^2)\cap L^2(0,T_0;H^1)\cap C_w([0,T_0];L^2)$ $\bP$-a.s. to the Navier--Stokes equations \eqref{eql} on $[0,T_0]\times\mR^3$  and initial data $u_0\equiv0$, i.e.
for all $t\in [0,T_0]$ and all divergence-free $\psi\in C^\infty_c(\mR^3)$
\begin{align*}
	\<u(t),\psi\>=\int_0^t\<u,\Delta\psi\>\dif r-\int_0^t \<u\cdot \nabla u,\psi\>\dif r+\int_0^t \<f,\psi\>\dif r+\int_0^t\<\psi,u\dif W\>,
\end{align*}
and the  following energy inequality holds true for all $t\in\mR^+$
\begin{align}\label{energy1}\E\|u(t\wedge T_0)\|_{L^2}^2+2\E\int_0^{t\wedge T_0}\|\nabla u\|_{L^2}^2\dif s\leq 2\E\int_0^{t\wedge T_0}\langle f,u\rangle\dif s+\E\int_0^{t\wedge T_0}\|u\|_{L^2}^2\dif s.
\end{align}
Moreover, the mapping $W\mapsto f=f(W)$ is continuous from $C([0,T])$ to $L^1_{T}L^2$.
\et

\begin{proof}
As a consequence of Lemma~\ref{lem:per} and \eqref{r:Ulin}, we deduce that \eqref{eql2} with $H=\bar H$ on $(-\infty,T]$, $T=\tau_{0}\wedge\log\tau_{R}$, admits two $(\mathcal{G}_{\tau})$-adapted solutions $\bar U$ and $U$ defined in \eqref{defU}.
The transform \eqref{tr2} then permits to go back to the physical variables, and there exist two distinct $(\hat \cF_t)$-adapted Leray--Hopf solutions
	$v_{1}$, $v_{2}$ to the Navier--Stokes equations \eqref{eql1} with $g$ given in \eqref{def:g} on $[0,\hat T]$ with $\hat T={e^{\tau_{0}}\wedge\tau_{R}}$  where
	$$v_{1}(t,x)=\frac1{\sqrt t}\bar U(\xi), \quad v_{2}(t,x)=\frac1{\sqrt t} U(\tau,\xi). $$ \rmk{In fact using Lemma~\ref{lem:per} and \eqref{r:Ulin} we see $(U^{lin}+U^{per})(\tau)\neq0$ for $\tau\to-\infty$ which implies $\bar U$ and $U$ are different. Then according to the change of variables formula, this implies $v_1,v_2$ are different when $t\to 0$. }
By Lemma \ref{lem:per}, 	$\hat T$ is a $(\hat\cF_t)$-stopping time.

Finally, we define
	 $$u_1(t)=e^{W(t)-t/2}(v_1\circ \theta)(t),\quad  u_2(t)=e^{W(t)-t/2} (v_2\circ\theta)(t)$$
	which gives two distinct $(\cF_t)$-adapted solutions to the original Navier--Stokes equations \eqref{eql} on $[0,T_{0}]$  with $T_{0}=\theta^{-1}\circ\hat T$ and  $f$ given in \eqref{def:f}.
Indeed, by Lemma \ref{lem:3}, $T_0$ is an $(\cF_t)$-stopping time.
In view of Lemma \ref{lem:2}, both $u_1$ and $u_2$ are $(\cF_t)$-adapted. Based on  change of variables and regularity of $\bar U$, $U$ we obtain
\begin{align}\label{re:u_i}
	\|u_i(t)\|_{L^2}\lesssim \theta(t)^{1/4}\lesssim t^{1/4},\quad	\|\nabla u_i(t)\|_{L^2}\lesssim \theta(t)^{-1/4}\lesssim t^{-1/4},\qquad i=1,2,
	\end{align}
where we used that $\theta(t)\sim t$ before $T_0$. In fact, if $t\leq T_0$ then $\theta(t)\leq \tau_R$ which implies that $|W_t|\leq R$, $|t|\leq R$. Thus   $\theta(t)\sim t$. This implies that $u_i\in C([0,T_0];L^2)\cap L^2(0,T_0;H^1)$ and the integral $\int_0^{T_0} \langle \div(u_{i}\otimes u_{i}),u_{i}\rangle$ is finite. By Lemma \ref{lem:per} and It\^{o}'s formula, we find that $u_1$ and $u_2$ have the desired regularity on $[0,T_0]$ $\bP$-a.s.,   satisfy the Navier--Stokes equations \eqref{eql} in the analytically weak sense with $f$ given in \eqref{def:f},  and also satisfy the energy inequality on $[0,T_0]$. 
\rmk{More precisely,  $w_1(t)=(v_1\circ \theta)(t)$ satisfies
$$\p_tw_1+e^{W-t/2}\div (w_1\otimes w_1)-\Delta w_1+e^{W-t/2}\nabla \pi\circ \theta=e^{W-t/2}g\circ \theta,\quad \div w=0.$$
By It\^o's formula we find 
$$\dif e^{W(t)-t/2}=e^{W(t)-t/2}\dif W,$$
which implies 
\begin{align*}\dif u_1&=e^{W(t)-t/2}\dif w_1+w_1 \dif e^{W(t)-t/2}+\dif \langle w_1, e^{W(t)-t/2}\rangle
\\&=-\div (u_1\otimes u_1)+\Delta u_1-e^{2W-t}\nabla \pi\circ \theta+f+u_1\dif W.\end{align*}
For $u_2$ we have a similar calculation. Also $u_1(0)=u_2(0)=v_1(0)=v_2(0)=0$. Thus $u_1$ and $u_2$   satisfy the Navier--Stokes equations \eqref{eql} in the analytically weak sense with $f$ given in \eqref{def:f}.}
From Lemma~\ref{lem:per} and \eqref{r:Ulin} we find that $u_1$ and $u_2$ are different. By \eqref{def:f}, continuity of $\theta$ with respect to $W$ and uniform integrability we know that $W\mapsto f$ is continuous functional with respect to $W$ from $C([0,T])$ to $L^1_{T}L^2$. The result follows.
\end{proof}

From \eqref{re:u_i} and It\^o's formula we further obtain that the above solutions satisfy for any $q\geq1$
	$$
	\E\left(\sup_{r\in [0,t\wedge T_0]}\|u(r)\|_{L^2}^{2q}+\int_{0}^{t\wedge T_0}\|\nabla u(r)\|^2_{L^2}\dif r\right)\lesssim\E\left(\int_0^{t\wedge T_{0}}\|f(r)\|_{L^2}\dif r\right)^{2q}+1<\infty.$$

\br\label{r:7}
As in \cite[(1.24)]{ABC22}, we obtain that  $f\in L^1_TL^p$ for any $p<3$. On the other hand, by a similar argument as in the deterministic setting, we know that if $f\in L^1_TL^3$ then there exists at most one solution in $C_TL^3$ to \eqref{eq1} starting from $0$. The existence of such a solution is guaranteed  locally in time. For the equation of $w$, a similar argument  as in the deterministic case \cite[Chapter 27]{LR02} yields uniqueness of solutions to \eqref{eq1} in $C_TL^3$ when $f\in L^1_TL^3$.
\er

\subsection{Non-uniqueness in law}\label{sec:3.2}

The aim of this section is to perform a probabilistic extension of  the local solutions to global ones  and to establish Theorem \ref{th:2intro}. The extension procedure follows from similar arguments as in \cite{HZZ19}. Since this method can also be applied to more general settings, in this section
we start with  the following general stochastic Navier--Stokes equations
\begin{equation}
	\label{1}
	\aligned
	\dif u-\Delta u \dif t+\div(u\otimes u)\dif t+\nabla p\,\dif t&=G(u)\dif W+f(W)\dif t,
	\\\div u&=0.
	\endaligned
\end{equation}
In the above,  $W$ is a Wiener process on a stochastic basis $(\Omega,\cF, (\cF_t)_{t\geq 0}, \bP)$, $G(u)\dif W$  represents a stochastic force acting on the fluid with $G$ satisfying the assumptions in Section \ref{s:p} and additionally $f(W)$ is a given random force where $f$ is a continuous map from $C(\R^{+};\mU_{1})$ to $L^{1}_{\rm loc}(\R^{+};L^{2})$ and $f(W)(t)\in\sigma\{W(s);s\leq t\}$. As mentioned in introduction, Leray--Hopf solutions to this system can only be constructed in the probabilistically weak sense (c.f. \cite{DD03,FR08}). Let us first recall the precise definition of these solutions.
\rmk{To introduce the notation of the solutions we work on the canonical space $\Omega_0$ and the canonical process $(x,y)$ introduced in Section \ref{s:p}. }

\bd\label{weak solution}
	Let $s\geq 0$ and $x_{0}\in L^{2}_{\sigma}$, $y_0\in \mU_1$. A probability measure $P\in \mathscr{P}({\Omega_0})$ is  a probabilistically weak Leray--Hopf solution to the Navier--Stokes system \eqref{1}  with the initial value $(x_0,y_0) $ at time $s$ provided
	
	\no(M1) $P(x(t)=x_0, y(t)=y_0,  0\leq t\leq s)=1$, for any $n\in\mathbb{N}$
	$$P\left\{(x,y)\in {\Omega}_0: \int_0^n\|G(x(r))\|_{L_2(\mU;L_2^\sigma)}^2\dif r<+\infty\right\}=1.$$

	\no(M2) Under $P$,  $y$ is a  cylindrical $({\mathcal{B}}_{t})_{t\geq s}$-Wiener process on  $\mU$ starting from $y_0$ at time $s$ and for every $\psi\in C^\infty_c(\mR^3)\cap L^2_\sigma$, and for $t\geq s$
	$$\langle x(t)-x(s),\psi\rangle+\int^t_s\langle \div(x(r)\otimes x(r))-\Delta x(r),\psi\rangle \dif r=\int_s^t \langle \psi, G(x(r))  \dif y(r)\rangle+\int_s^t\<\psi,f(y)(r)\>\dif r.$$

	\no (M3) It holds for all $t\geq s$
	$$E^P\|x(t)\|_{L^2}^2+2E^P\int_s^{t}\|\nabla x\|_{L^2}^2\dif r\leq \|x(s)\|_{L^2}^2+2E^P\int_s^{t}\langle f(y)(r),x(r)\rangle\dif r+E^P\int_s^{t}\|G(x(r))\|_{L^2(\mU,L^2_\sigma)}^2\dif r,$$
	and for any $q\in \mathbb{N}$ there exists a positive real function $t\mapsto C_{t,q}$ such that  for all $t\geq s$
	$$E^P\left(\sup_{r\in [0,t]}\|x(r)\|_{L^2}^{2q}+\int_{s}^t\|\nabla x(r)\|^2_{L^2}\dif r\right)\leq C_{t,q}(\|x_0\|_{L^2}^{2q}+1+E^P\|f(y)\|_{L^1_tL^2}^{2q})<\infty.$$
\ed

For our purposes, we also  require a definition of probabilistically weak solutions defined up to a stopping time $\tau$. To this end, we set
$$
{\Omega}_{\tau}:=\{\omega(\cdot\wedge\tau(\omega));\omega\in {\Omega}_0\}.
$$

\bd\label{weak solution 1}
	Let $s\geq 0$ and $x_{0}\in L^{2}_{\sigma}$, $y_0\in \mU_1 $. Let $\tau\geq s$ be a $({\mathcal{B}}_{t})_{t\geq s}$-stopping time. A probability measure $P\in \mathscr{P}({\Omega}_\tau)$ is  a probabilistically weak solution to the Navier--Stokes system \eqref{1} on $[s,\tau]$ with the initial value $(x_0,y_0) $ at time $s$ provided
	
	\no(M1) $P(x(t)=x_0, y(t)=y_0,  0\leq t\leq s)=1$
	and for any $n\in\mathbb{N}$
	$$P\left\{(x,y)\in \Omega_0: \int_0^{n\wedge \tau}\|G(x(r))\|_{L_2(\mU;L_2^\sigma)}^2\dif r<+\infty\right\}=1.$$

	\no(M2) Under $P$, $\langle y(\cdot\wedge \tau),l\rangle_{\mU}$ is a {continuous square integrable $({\mathcal{B}}_{t})_{t\geq s}$}-martingale  starting from $y_0$ at time $s$ with quadratic variation process given by $(t\wedge \tau-s)\|l\|_{\mU}^2$ for $l\in U$.  For every $\psi\in C^\infty_c(\mathbb{R}^3)\cap L^2_\sigma$, and for $t\geq s$
	\begin{align*}
	&\langle x(t\wedge \tau)-x(s),\psi\rangle+\int^{t\wedge \tau}_s\langle \div(x(r)\otimes x(r))-\Delta x(r),\psi\rangle \dif r
	\\&\qquad=\int_s^{t\wedge\tau} \langle \psi, G(x(r))  \dif y(r)\rangle+\int_s^{t\wedge \tau}\<f(y)(r),\psi\>\dif r.
	\end{align*}

	\no (M3) It holds for all $t\geq s$
	\begin{align*}	
&	E^P\|x(t\wedge \tau)\|_{L^2}^2+2E^P\int_s^{t\wedge \tau}\|\nabla x\|_{L^2}^2\dif r
\\&\qquad\leq \|x(s)\|_{L^2}^2+2E^P\int_s^{t\wedge \tau}\langle f(y)(r),x(r)\rangle\dif r+E^P\int_s^{t\wedge \tau}\|G(x(r))\|_{L^2(\mU,L^2_\sigma)}^2\dif r,
\end{align*}
	and for any $q\in \mathbb{N}$ there exists a positive real function $t\mapsto C_{t,q}$ such that  for all $t\geq s$
	\begin{align*}
	&E^P\left(\sup_{r\in [0,t\wedge\tau]}\|x(r)\|_{L^2}^{2q}+\int_{s}^{t\wedge\tau}\|\nabla x(r)\|^2_{L^2}\dif r\right)
	\\&\qquad\leq C_{t,q}\Big(\|x_0\|_{L^2}^{2q}+E^P\Big(\int_0^{t\wedge\tau}\|f(y(r))\|_{L^2}d r\Big)^{2q}+1\Big)<\infty.
	\end{align*}
\ed

First, we show that probabilistically weak solutions in the sense of Definition \ref{weak solution} exist and are stable with respect to approximations of the initial time and the initial condition.

\begin{theorem}\label{convergence 1}
	For every $(s,x_0,y_0)\in [0,\infty)\times L_{\sigma}^2\times \mU_1 $, there exists  $P\in\mathscr{P}({\Omega}_0)$ which is a probabilistically weak solution to the Navier--Stokes system \eqref{1} starting at time $s$ from the initial condition $(x_0,y_0)$  in the sense of Definition \ref{weak solution}. The set of all such probabilistically weak solutions  with the same implicit constant $C_{t,q}$ in  Definition \ref{weak solution} is denoted by $\mathscr{W}(s,x_0,y_0, C_{t,q})$.
	
	Let $(s_n,x_n,y_n)\rightarrow (s,x_0,y_0)$ in $[0,\infty)\times L_{\sigma}^2\times \mU_1 $ as $n\rightarrow\infty$  and let $P_n\in \mathscr{W}(s_n,x_n,y_n, C_{t,q})$. Then there exists a subsequence $n_k$ such that the sequence $(P_{n_k})_{k\in\mathbb{N}}$  converges weakly to some $P\in\mathscr{W}(s,x_0,y_0, C_{t,q})$.
\end{theorem}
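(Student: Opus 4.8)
The plan is to establish both assertions through a single tightness-and-identification scheme of martingale-problem type: existence is obtained from a Galerkin approximation whose weak limit is identified exactly as in the stability statement, so the two parts share the same analytic core. For existence I would fix $(s,x_0,y_0)$ and build approximate solutions $u^m$ by a spectral Galerkin (or mollified/truncated) scheme for \eqref{1}, solving the resulting finite-dimensional stochastic differential equations with globally defined coefficients. Applying It\^o's formula to $\|u^m\|_{L^2}^{2}$ and using the growth bound $\|G(x)\|_{L_2(\mU,L^2_\sigma)}\lesssim 1+\|x\|_{L^2}$ from Section \ref{s:p}, together with the Burkholder--Davis--Gundy inequality and Gronwall, yields uniform bounds of precisely the form appearing in (M3), with a constant $C_{t,q}$ depending only on $(t,q)$ and the structural constants. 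These are exactly the bounds defining $\mathscr{W}(s,x_0,y_0,C_{t,q})$.

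Next, given the uniform energy bounds --- either for the Galerkin sequence or for the hypothesized family $(P_n)$ --- I would prove tightness of the laws on $\Omega=C([0,\infty);H^{-3}_{\mathrm{loc}}\times\mU_1)\cap L^2_{\mathrm{loc}}([0,\infty);L^2_\sigma\times\mU_1)$. The uniform control in $L^\infty_tL^2\cap L^2_tH^1$ provides, via the Aubin--Lions--Simon lemma applied on each ball $B_R$, relative compactness in $L^2_{\mathrm{loc}}([0,T];L^2_{\mathrm{loc}})$ once a fractional time-regularity is available; the latter follows from the equation for the drift and force, whose terms are integrable in time with values in $H^{-3}_{\mathrm{loc}}$, and from a BDG estimate giving $C^\alpha_tH^{-3}$ bounds for the stochastic integral $\int_s^\cdot G(x)\,\dif y$. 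Tightness of the noise component $y$ is automatic since each $P_n$ makes it a Wiener process. Diagonalizing over $R$ and invoking Prokhorov produces a weakly convergent subsequence.

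Because $\Omega$ carries a non-metrizable locally convex topology (it is only quasi-Polish), I would then apply Jakubowski's generalization of the Skorokhod representation theorem to obtain almost surely convergent realizations $(\tilde x_n,\tilde y_n)\to(\tilde x,\tilde y)$ on a common probability space. Strong $L^2_{\mathrm{loc}}$ convergence handles the nonlinearity $\div(\tilde x_n\otimes\tilde x_n)\to\div(\tilde x\otimes\tilde x)$; the force converges by the assumed continuity of $f$ from $C(\R^+;\mU_1)$ to $L^1_{\mathrm{loc}}(\R^+;L^2)$; and the stochastic term is treated by the martingale-problem method, showing that $\tilde y$ remains a cylindrical Wiener process and that the functional in (M2) is a martingale with the correct quadratic variation, where the continuity hypothesis $G(y_n)^*x\to G(y)^*x$ in $\mU$ from Section \ref{s:p} is exactly what is required. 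Condition (M1) passes by convergence of the initial data, and the energy inequalities (M3) pass by weak lower semicontinuity of the $L^2$ and $H^1$ norms combined with Fatou's lemma; the latter also preserves the constant $C_{t,q}$, so the limit lies in $\mathscr{W}(s,x_0,y_0,C_{t,q})$.

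I expect the main obstacle to be the identification of the stochastic integral in the limit, that is, showing that (M2) survives the weak convergence. Since the solutions are only probabilistically weak, one cannot pass to the limit in a prescribed It\^o integral; instead one must verify that the relevant quadratic- and cross-variation functionals are martingales under the limit law, which is precisely where the joint convergence of $(\tilde x_n,\tilde y_n)$ and the continuity assumption on $G$ are indispensable. A secondary technical nuisance is the absence of spatial compactness on $\mR^3$, which forces every compactness argument to be localized to balls $B_R$ and then diagonalized in $R$, consistently with the $H^{-3}_{\mathrm{loc}}$ and $L^2_{\mathrm{loc}}$ topologies built into the definition of $\Omega$.
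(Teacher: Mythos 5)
Your proposal is correct and takes essentially the same route as the paper: the paper's proof simply defers to \cite[Theorem 5.1]{HZZ19}, i.e.\ uniform energy bounds giving tightness of the laws on the canonical path space (with the lack of compactness on $\mR^3$ handled by the local topologies, via \cite[Lemma 2.7]{MR}), followed by identification of the limit through the martingale problem, with the force passing to the limit by the assumed continuity of $y\mapsto f(y)$ and (M3) by lower semicontinuity. Your localized Aubin--Lions/diagonalization tightness argument and the Jakubowski representation plus martingale identification are exactly the ingredients encapsulated in those citations.
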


\begin{proof}
	The proof follows  the  lines of the proof of \cite[Theorem 5.1]{HZZ19}. The main difference is the function space for tightness since we are now on the whole space $\R^{3}$ instead of the torus $\mathbb{T}^{3}$. In this case we can use \cite[Lemma 2.7]{MR} to deduce that the family  of probability measures $P_{n}$, $n\in\N$, is tight on
	\begin{align*}
		\Big(C(\mR^+;H^{-3}_{\text{loc}})\cap \big(L^2_{\text{loc}}(\mR^+;H^1),w\big)\cap L^2_{\text{loc}}(\mR^+;L^2_{\text{loc}})\Big)\rmk{\times C(\mR^+;\mU_1)},
	\end{align*}
where $\big(L^2_{\text{loc}}(\mR^+;H^1),w\big)$ denotes  the weak topology on  $L^2_{\rm loc}(\R^{+};H^1)$. The convergence of $f(y)$ follows from continuity of $y\mapsto f(y)$ from $C(\mR^+,\mU_1)\to L^1(0,T;L^2)$ for every $T\geq0$.
\end{proof}

As the next step, we shall extend  probabilistically weak solutions defined up a {$({\mathcal{B}}_{t})_{t\geq 0}$}-stopping time $\tau$ to the whole interval $[0,\infty)$. We denote by ${\mathcal{B}}_{\tau}$ the $\sigma$-field associated with  $\tau$.

We recall the following result from \cite[Proposition 5.2, Proposition 5.3]{HZZ19}.

\bp\label{prop:1 1}
	Let $\tau$ be a bounded $({\mathcal{B}}_{t})_{t\geq0}$-stopping time. Then for every $\omega\in {\Omega}_0$ there exists $Q_{\omega}\in\mathscr{P}({\Omega}_0)$  such that for $\omega\in \{x(\tau)\in L^2_\sigma\}$
	\begin{equation}\label{qomega 1}
		Q_\omega\big(\omega'\in\Omega; (x,y)(t,\omega')=(x,y)(t,\omega) \textrm{ for } 0\leq t\leq \tau(\omega)\big)=1,
	\end{equation}
	and
	\begin{equation}\label{qomega2 1}
		Q_\omega(A)=R_{\tau(\omega),x(\tau(\omega),\omega),y(\tau(\omega),\omega)}(A)\qquad\text{for all}\  A\in \mathcal{B}^{\tau(\omega)}.
	\end{equation}
	where $R_{\tau(\omega),x(\tau(\omega),\omega),y(\tau(\omega),\omega)}\in\mathscr{P}({\Omega}_0)$ is a probabilistically weak solution to the Navier--Stokes system \eqref{1} starting at time $\tau(\omega)$ from the initial condition $(x(\tau(\omega),\omega), y(\tau(\omega),\omega))$. Furthermore, for every $B\in{\mathcal{B}}$ the mapping $\omega\mapsto Q_{\omega}(B)$ is ${\mathcal{B}}_{\tau}$-measurable.
\ep

\bp\label{prop:2 1}
	Let $x_{0}\in L^{2}_{\sigma}$.
	Let $P$ be a probabilistically weak solution to the Navier--Stokes system \eqref{1} on $[0,\tau]$ starting at the time $0$ from the initial condition $(x_{0},0)$. In addition to the assumptions of Proposition \ref{prop:1 1}, suppose that there exists a Borel  set $\mathcal{N}\subset{\Omega}_{\tau}$ such that $P(\mathcal{N})=0$ and for every $\omega\in \mathcal{N}^{c}$ it holds
	\begin{equation}\label{Q1 1}
		\aligned
		&Q_\omega\big(\omega'\in\Omega_0; \tau(\omega')=
		\tau(\omega)\big)=1.
		\endaligned
	\end{equation}
	Then the  probability measure $ P\otimes_{\tau}R\in \mathscr{P}({\Omega}_0)$ defined by
	\begin{equation*}
		P\otimes_{\tau}R(\cdot):=\int_{{\Omega}}Q_{\omega} (\cdot)\,P(\dif\omega)
	\end{equation*}
	satisfies $P\otimes_{\tau}R= P$ on $\sigma\{x(t\wedge\tau),y(t\wedge \tau),t\geq0\}$ and
	is a probabilistically weak solution to the Navier--Stokes system \eqref{1} on $[0,\infty)$ with initial condition $(x_{0},0)$.
\ep
\begin{proof}
	The proof follows the  lines of \cite[Proposition 5.3]{HZZ19}. The main difference is that we have to verify  (M3) holds for $P\otimes_{\tau}R$. We have
	 \begin{equation*}
		\aligned
		&E^{P\otimes_{\tau}R}\Big(\|x(t)\|_{L^2}^{2}+2\int_0^t\|\nabla x(r)\|_{L^2}^2\dif r\Big)\\
		&= E^{P\otimes_{\tau}R}\Big(\|x(t\wedge \tau)\|_{L^2}^{2}+2\int_0^{t\wedge\tau}\|\nabla x(r)\|_{L^2}^2\dif r\Big)
		\\&\qquad+E^{P\otimes_{\tau}R}\Big(\|x(t)\|_{L^2}^{2}-\|x(t\wedge\tau)\|_{L^2}^{2}+2\int_{t\wedge\tau}^t\|\nabla x(r)\|_{L^2}^2\dif r\Big).
		\endaligned
	\end{equation*}
For the first term on the right hand side, we have
\begin{align*}
&E^{P\otimes_{\tau}R}\Big(\|x(t\wedge \tau)\|_{L^2}^{2}+2\int_0^{t\wedge\tau}\|\nabla x(r)\|_{L^2}^2\dif r\Big)
\\	& \leq \|x(0)\|_{L^2}^2+2E^{P}\int_0^{t\wedge \tau}\<f(y),x\>\dif r+E^P\int_0^{t\wedge \tau}\|G(x(r))\|_{L^2(\mU,L^2_\sigma)}^2\dif r.
\end{align*}
For the second term we note that under $Q_\omega$
\begin{align*}
	&E^{Q_\omega}\Big(\|x(t)\|_{L^2}^{2}-\|x(t\wedge\tau(\omega))\|_{L^2}^{2}+2\int_{t\wedge\tau(\omega)}^t\|\nabla x(r)\|_{L^2}^2\dif r\Big)
	\\&\leq 2E^{Q_\omega}\int_{t\wedge \tau(\omega)}^t\<f(y),x\>\dif r+E^{Q_\omega}\int_{t\wedge \tau(\omega)}^t\|G(x(r))\|_{L^2(\mU,L^2_\sigma)}^2\dif r.
\end{align*}
Integrating with respect to $P$ and using \eqref{Q1 1}, we deduce
\begin{align*}
	&E^{P\otimes_{\tau}R}\Big(\|x(t)\|_{L^2}^{2}-\|x(t\wedge\tau)\|_{L^2}^{2}+2\int_{t\wedge\tau}^t\|\nabla x(r)\|_{L^2}^2\dif r\Big)
	\\&\leq 2E^{P\otimes_{\tau}R}\int_{t\wedge \tau}^t\<f(y),x\>\dif r+E^{P\otimes_{\tau}R}\int_{t\wedge \tau}^t\|G(x(r))\|_{L^2(\mU,L^2_\sigma)}^2\dif r.
\end{align*}
Hence, the first inequality in (M3) holds for $P\otimes_{\tau}R$ and the second one is similar.
\end{proof}

From now on, we restrict ourselves to the setting of a linear multiplicative noise as in Section \ref{sec:loc}. In particular, the driving Wiener process is real-valued and consequently $\mU=\mU_{1}=\mathbb{R}$.
Furthermore, we choose $f$ as defined in \eqref{def:f}. By definition, it belongs to $L^{1}_{\rm loc}(\R^{+};L^{2})$ and  in Theorem~\ref{th:2} we only used its restriction to $[0,T_{0}]$. In addition,
$f$ is a continuous functional of the driving Brownian motion $W$, namely,   $W\mapsto f(W)$ is a continuous  map from $C(\mR^+;\mU_1)$ to $L^1_{\text{loc}}(\mR^+;L^2)$ satisfying
$f(y)(t)\in \cB_t^0(y)$ for $y\in C(\mR^+;\mU_1)$ and every $t\geq0$, where $\cB_t^0(y)=\sigma\{y(s),s\leq t\}$. Furthermore, we have for every $t\geq0$ and $q\geq1$
\begin{align}\label{bd:f}
	\E\|f\|_{L_t^1L^2}^{2q}<\infty.
\end{align}
Indeed, since $h(t)^{-1}=e^{W(t)-t/2}$ is an exponential martingale, we have for any $p\geq1$
$$\E \big(\max_{s\in [0,t]}h(s)^{-1}\big)^p\lesssim1.$$
Hence, we consider $g(\theta(t))$.  By definition of $g$, we could write
\begin{align*}
	g(\theta(t))=g_1(\theta(t))+h(t)g_2(\theta(t)),
\end{align*}
for some $g_1=\frac1{t^{3/2}}H_1(\frac{x}{\sqrt t})$ and $g_2=\frac1{t^{3/2}}H_2(\frac{x}{\sqrt t})$ with $H_1, H_2$ smooth functions with compact support.
By change of variables we have
\begin{align*}
	\int_0^t\|g_1(\theta(s))\|_{L^2}\dif s\lesssim \int_0^t|\theta(s)|^{-3/4}\dif s.
\end{align*}
Since
$$\theta(t)=\int_0^te^{W(s)-s/2}\dif s\geq \min_{s\in [0,t]} e^{W(s)-s/2}t,$$
we have
\begin{align*}
	\int_0^t\|g_1(\theta(s))\|_{L^2}\dif s\lesssim \int_0^ts^{-3/4}\dif s \max_{s\in [0,t]}(e^{-\frac34W(s)+\frac{3s}8}).
\end{align*}
Hence, we have for any $p\geq 1$
\begin{align*}
	\E\Big(\int_0^t\|g_1(\theta(s))\|_{L^2}\dif s\Big)^p\lesssim 1.
\end{align*}
Similarly, we have
\begin{align*}
	\E\Big(\int_0^t\|h(s)^{-1}g_2(\theta(s))\|_{L^2}\dif s\Big)^p\lesssim 1.
\end{align*}
Hence, \eqref{bd:f} holds.

Next,  we shall  introduce the stopping time as in Section \ref{sec:loc}, i.e. we  define
$$\bar\theta(t):=\int_0^te^{y(s)-s/2}\dif s,\quad t\geq0,$$
which is also positive for $t>0$,  strictly increasing   and continuous for every $y$. Hence we also have the inverse of $\bar \theta$ denoted as $\bar \theta^{-1}$.
For $n\in\mathbb{N}$, $R>1$  we define
\begin{equation*}
	\aligned\bar\tau^n(\omega)&:=\inf\left\{t> 0, |\bar\theta^{-1}(t,\omega)|>R-\frac{1}{n}\right\}\bigwedge \inf\left\{t>0,\|y(\bar\theta^{-1}(t),\omega)\|_{C_t^{\frac{1}{4}}}>R-\frac{1}{n}\right\}\bigwedge e^{\tau_0},
	\endaligned
\end{equation*}
with $\tau_0$ being the deterministic constant given in the proof of Lemma \ref{lem:per}. Set
$$\bar T^n:=\bar \theta^{-1}\circ \bar\tau^n.$$
Then the sequence $\{\bar T^{n}\}_{n\in\mathbb{N}}$ is nondecreasing and we define
\begin{equation}\label{eq:tauL 1}
	\bar T:=\lim_{n\rightarrow\infty}\bar T^n.
\end{equation}
Without additional regularity of the process $y$, it holds true that $\bar\tau^{n}(\omega)=0$.
By \cite[Lemma~3.5]{HZZ19} and Lemma \ref{lem:3} we obtain that $\bar T^{n}$ is $({\mathcal{B}}_t)_{t\geq0}$-stopping time and consequently also  $\bar T$ is a $({\mathcal{B}}_t)_{t\geq 0}$-stopping time as an increasing limit of stopping times.

Now, we fix   a real-valued Wiener process $W$ defined on a probability space $(\Omega, \mathcal{F},\mathbf{P})$ and we denote by  $(\mathcal{F}_{t})_{t\geq0}$ its normal filtration. On this stochastic basis, we apply Theorem~\ref{th:2} and denote by $u_1$ and $u_2$ the corresponding solution to the Navier--Stokes system \eqref{eql} on $[0,T_0]$, where the stopping time $T_{0}$ is defined in the proof of Theorem~\ref{th:2}.
{We recall that $u_i, i=1,2,$ is adapted with respect to $(\mathcal{F}_{t})_{t\geq0}$.}
 We denote by $P_i$ the law of $(u_i,W)$ and obtain the following result by similar arguments as in the proof of \cite[Proposition 5.4]{HZZ19}.

\bp\label{prop:ext 1}
	The probability measure $P_i, i=1,2,$ is a probabilistically weak solution to the Navier--Stokes system \eqref{eql} on $[0,\bar T]$ in the sense of Definition \ref{weak solution 1}, where $\bar T$ was defined in \eqref{eq:tauL 1}.
\ep

\bp\label{prp:ext2 1}
	The probability measure $P_i\otimes_{\bar T}R, i=1,2,$ is a probabilistically weak solution to the Navier--Stokes system \eqref{eql} on $[0,\infty)$ in the sense of Definition \ref{weak solution}.
\ep

\begin{proof}
	The proof follows from similar argument as in \cite[Proposition 3.8]{HZZ19}. In light of Proposition \ref{prop:1 1} and Proposition \ref{prop:2 1}, it only remains to establish \eqref{Q1 1}.
We know that
	\begin{align*}
		\begin{aligned}
			{P_i}\left(\omega:y(\bar\theta^{-1}(\cdot\wedge \bar T(\omega)))\in C^{{\frac13}}_{\mathrm{loc}}(\mR^+;\mR)\right)=1.
		\end{aligned}
	\end{align*}
	This means that  there exists a ${P_i}$-measurable set $\mathcal{N}\subset \Omega_{\bar T}$ such that ${P_i}(\mathcal{N})=0$ and for $\omega\in \mathcal{N}^c$
	\begin{equation}\label{continuity1}
	y(\bar\theta^{-1}(\cdot\wedge \bar T(\omega)))\in C^{{\frac13}}_{\mathrm{loc}}(\mR^+;\mR).
	\end{equation}
Similar as in \cite[Proposition 3.8]{HZZ19} for all  $\omega\in \mathcal{N}^c\cap \{x(\tau)\in L^2_\sigma\}$
	$$
	Q_\omega\left(\omega'\in\Omega_{0}; y(\bar\theta^{-1})\in C^{{\frac13}}_{\mathrm{loc}}(\mR^+;\mR)\right)=1.
	$$

	As a consequence, for all
	$\omega\in \mathcal{N}^c\cap \{x(\tau)\in L^2_\sigma\}$ there exists a measurable set $N_\omega$ such that $Q_\omega(N_\omega)=0$ and for all $\omega'\in N_\omega^c$ the trajectory
	$t\mapsto y(\bar\theta^{-1})(t,\omega')$ belongs to $ C^{{\frac{1}{3}}}_{\mathrm{loc}}(\mR^+;\mR)$. Therefore, by \eqref{eq:tauL 1} and $\bar\theta^{-1}\in C^1_{\text{loc}}(\mR^+;\mR)$ for all $\omega'\in \Omega$ we obtain that
	$\bar T(\omega')=\widetilde T(\omega')$ for all $\omega'\in N_\omega^c$ where $\widetilde T=\bar\theta^{-1}\circ \widetilde \tau$
	\begin{equation*}
		\widetilde{\tau}(\omega'):=\inf\left\{t\geq 0, |\bar\theta^{-1}(t)|\geq  R\right\}\bigwedge\inf\left\{t\geq 0,\|y(\bar \theta^{-1})\|_{C_t^{1/4}}\geq R\right\}\bigwedge e^{\tau_0}.\end{equation*}
	This implies that for $t>0$
	\begin{equation}\label{mea}\aligned
		\left\{\omega'\in N_\omega^c,\widetilde T(\omega')\leq t\right\}
		&=\left\{\omega'\in N_\omega^c,\widetilde \tau(\omega')\leq \bar\theta(t)\right\}
	\\	&=\left\{\omega'\in N_\omega^c, \sup_{s\in\mathbb{Q},s\leq \bar\theta(t)}
		|\bar\theta^{-1}(s)|\geq R\right\}
		\\&\qquad\bigcup\left\{\omega'\in N_\omega^c, \sup_{s_1\neq s_2\in \mathbb{Q}\cap [0,\bar\theta(t)]}\frac{|(y(\bar \theta^{-1}))(s_1)-(y(\bar \theta^{-1}))(s_2)|}{|s_1-s_2|^{\frac{1}{4}}}\geq R\right\}
		\\&\qquad\bigcup\left\{\omega'\in N_\omega^c, e^{\tau_0}\leq \bar\theta(t)\right\}
		\\&=\left\{\omega'\in N_\omega^c, \sup_{s_1\neq s_2\in \mathbb{Q}\cap [0,t]}\frac{|y(s_1)-y(s_2)|}{|\bar\theta(s_1)-\bar\theta(s_2)|^{\frac{1}{4}}}\geq R\right\}
		\\&\qquad\bigcup\left\{\omega'\in N_\omega^c,t\geq R\right\}\bigcup\left\{\omega'\in N_\omega^c, e^{\tau_0}\leq \bar\theta(t)\right\}
		\\&=: N^c_\omega \cap A_t.\endaligned
	\end{equation}
	Finally, we deduce that for all $\omega\in\mathcal{N}^c\cap \{x(\tau)\in L^2_\sigma\}$ with $P_i(x(\tau)\in L^2_\sigma)=1$
	\begin{equation}\label{Q}
		\aligned
		&Q_\omega\big(\omega'\in\Omega; \bar T (\omega')=\bar T(\omega)\big)=Q_\omega\big(\omega'\in N_\omega^c; \bar T (\omega')=\bar T(\omega)\big)
		\\&\quad=Q_\omega\big(\omega'\in N_\omega^c; \omega'(s)=\omega(s), 0\leq s\leq \bar T(\omega), \bar T (\omega')=\bar T(\omega)\big)=1,
		\endaligned
	\end{equation}
	where we used \eqref{qomega 1} and the fact that (\ref{mea}) implies
	$$\{\omega'\in N_\omega^c; \bar T (\omega')=\bar T(\omega)\}=N_\omega^c\cap (A_{\bar T(\omega)}\backslash (\cup_{n=1}^\infty A_{\bar T(\omega)-\frac1n}))\in N_\omega^c\cap \mathcal{B}_{\bar T(\omega)}^0,$$
	and  $Q_\omega(A_{\bar T(\omega)}\backslash (\cup_{n=1}^\infty A_{\bar T(\omega)-\frac1n}))=1$.
	This verifies the condition \eqref{Q1 1} in Proposition \ref{prop:2 1} and as a consequence ${P_i}\otimes_{\tau_{L}}R$ is a probabilistically weak solution to the Navier--Stokes system \eqref{eq1} on $[0,\infty)$ in the sense of Definition \ref{weak solution}.
\end{proof}

\bt \label{th:1law}
There exists a force $f$, which is a measurable functional of the driving Brownian motion $W$ such that there exist two distinct probabilitically weak Leray--Hopf solutions $\bP_1$ and $\bP_2$ to the Navier--Stokes system \eqref{eql} and
$f\in L^1_{\rm loc}(\mR^+;L^2)$ $\bP_i$-a.s.
\et

\begin{proof} Define $\bP_i=P_i\otimes_{\bar T}R$, $i=1,2$ for $P_i\otimes_{\bar T}R$ in Proposition \ref{prp:ext2 1}.
	Using Theorem \ref{th:2} the laws of these two probabilistically weak solutions are distinct. Indeed, before $\bar T$ we see that the rates with which the  two solutions converge to zero are different which implies law of two solutions are different. In fact, we have
		$$P_1\otimes_{\bar T}R\left(x(t)=\frac{e^{y(t)-t/2}}{\sqrt{\theta(t)}}\bar{U}\bigg(\frac{\cdot}{\sqrt{\theta(t)}}\bigg),\ t\leq \bar T\right)=1,$$
	$$P_2\otimes_{\bar T}R\left(x(t)=\frac{e^{y(t)-t/2}}{\sqrt{\theta(t)}}\bar{U}\bigg(\frac{\cdot}{\sqrt{\theta(t)}}\bigg),\ t\leq \bar T\right)=0.$$	 As a consequence joint non-uniqueness in law, i.e. non-uniqueness of probabilistically weak solutions, holds for the Navier--Stokes system \eqref{eql}.
\end{proof}

\section{Deterministic  force in a dense set}\label{sec:det}

The aim of this section is to prove that for any given $f$ in a suitable function space  the following deterministic forced Navier--Stokes equations  on $[0,1]\times\R^{3}$
\begin{align}\label{eq1}\partial_t u&=\Delta u+\bar f+f- u\cdot \nabla u+\nabla p,\quad \div u=0,
	\\u(0)&=0,\no
\end{align}
admits two Leray--Hopf solutions, where $\bar f$ is the force from Section~\ref{sec:2}.
Note that it is enough to construct these solutions on some time interval $[0,T_{0}]$, $T_{0}>0$, as these can be always extended to $[0,1]$ by a Leray--Hopf solution obtained by the usual argument. As a matter of fact, the solutions obtained in \cite{ABC22} are even suitable Leray--Hopf solutions, that is, they additionally satisfy a local energy inequality (see \eqref{eq:locen} below). Based on the discussion in \cite[Chapter 30]{LR02} suitable Leray--Hopf solutions exist for every initial condition in $L^{2}$. Accordingly, the solutions \`a la \cite{ABC22} can be extended to $[0,1]$ by suitable Leray--Hopf solutions and the resulting solutions remain suitable Leray--Hopf.

Next, we recall the notion of Leray--Hopf solution in this setting.

\bd \label{def:sol} Let $u_0\in L^2$ be a divergence-free vector field, and $f+\bar f\in L^1(0,1;L^2)$. A Leray--Hopf solution to the Navier--Stokes system \eqref{eq1} on $[0,1]\times \mR^3$ with initial data $u_0$ and force $f+\bar f$ is a divergence-free vector field $u\in L^\infty(0,1;L^2)\cap L^2(0,1;H^1)\cap C_w([0,1];L^2)$ such that $u(0)=u_0$ and for all $t\in [0,1]$ and all divergence-free $\psi\in C^\infty_c(\mR^3)$
\begin{align*}
	\<u(t),\psi\>-\<u(0),\psi\>=\int_0^t\<u,\Delta\psi\>\dif r-\int_0^t \<u\cdot \nabla u,\psi\>\dif r+\int_0^t \<f+\bar f,\psi\>\dif r,
\end{align*}
and the  following energy inequality holds true for all $t\in(0,1]$
\begin{align}\label{energy}\|u(t)\|_{L^2}^2+2\int_0^t\|\nabla u\|_{L^2}^2\dif s\leq \|u(0)\|_{L^2}^2+2\int_0^t\langle f+\bar f,u\rangle\dif s.
\end{align}
We say that the solution is suitable provided it satisfies the local energy inequality
\begin{equation}\label{eq:locen}
(\partial_{t}-\Delta)\frac12|u|^{2}+|\nabla u|^{2}+\div\left[\left(\frac12|u|^{2}+p\right)u\right]\leq (f+\bar f)\cdot u
\end{equation}
in the sense of distributions on $(0,1)\times\R^{3}$,
where $p\in L^{1}((0,1)\times\R^{3})$ is the associated pressure.
\ed

As the first step, we  solve the following  equation
\begin{align}\label{eq4}\partial_t u&=\Delta u+f-\mathbb{P}[\bar u\cdot \nabla u+u\cdot \nabla \bar u+u\cdot \nabla u],
	\\u(0)&=0,\nonumber
	\end{align}
where $\bar u(t,x)=\frac1{\sqrt{t}}\bar U(\xi)$ with the notation of  Section \ref{sec:2}.
For an integer $N>5/2$ and $\eps>0$, define a Banach space
$$Y:=\{f\in C((0,1),H^N), \|f\|_{Y}<\infty\},$$
with
$$\|f\|_{Y}:=\sup_{t\in[0,1]} \sum_{k=0}^Nt^{\frac34-a-\eps+\frac{k}2}\|\nabla^k f(t)\|_{L^2},$$
where $a$ is introduced in \eqref{def:a}.
It is easy to see that $C_c^\infty((0,1)\times \mR^3)\subset Y\subset L^1(0,1;L^2)$.

\bp\label{pro:1} Assume that $N>5/2$ is an integer and consider $f\in Y$. Then there exist $T\in\mathbb{R}$ and $u\in C([0,e^T];L^2)\cap L^2(0,e^T;H^1)$ a solution to \eqref{eq4} satisfying for any $p\in[2,\infty)$ and $k\leq N$, $k\in\mN$, $t\in [0,e^T]$
\begin{align*}t^{k/2}\|\nabla^k u(t)\|_{L^p}\lesssim t^{\frac12(\frac3p-1)}.
\end{align*}
\ep
\begin{proof}If we consider \eqref{eq4} directly and try to use fixed point argument we will see a problem coming from $\bar u$.  Instead, we perform the following transform as in \eqref{tr:1}
	$$U(\tau,\xi)=U \left(\log t,\frac{x}{\sqrt{t}}\right)=\sqrt{t}u(t,x), \quad f(t,x)=\frac1{t^{3/2}} F \left(\log t,\frac{x}{\sqrt{t}}\right).$$  Then it follows that  $U$ satisfies the following equations
	\begin{align}\label{eq6}
	\begin{aligned}
		\partial_\tau U&=L_{ss}U+\mathbb{P}(F-U\cdot \nabla U),\\
		U(-\infty)&=0,
		\end{aligned}
		\end{align}
	where $L_{ss}$ was defined  in \eqref{def:Lss}.
	This problem can be solved by a fix point argument. By Duhamel formula, we have
	\begin{align*}U(\tau)=\int_{-\infty}^{\tau} e^{(\tau-s)L_{ss}}\mathbb{P}[F-(U\cdot \nabla U)]\dif s
	\end{align*}
	and  we define the norm $$\|U\|_{X_T}:=\sup_{\tau<T}e^{-(a+\varepsilon)\tau}\|U(\tau)\|_{H^N}.$$
	with some $\eps>0$, $a$ given in \eqref{def:a} and $T\in\R$.
	In view of  Lemma \ref{lem:Lss} we have for $\tau\in \mR$, $0<\delta<\varepsilon$
	\begin{align*}
	\|U(\tau)\|_{H^N}&\lesssim \int_{-\infty}^{\tau} \Big(e^{(\tau-s)(a+\delta)+s(a+\varepsilon)}\|F\|_{X_\tau}+\frac{e^{(\tau-s)(a+\delta)}e^{2s(a+\varepsilon)}}{(\tau-s)^{1/2}}\|U\|_{X_\tau}^2\Big)\dif s
		\\&\lesssim e^{\tau(a+\varepsilon)}\|F\|_{X_\tau}+e^{\tau(2a+2\varepsilon)}\|U\|_{X_\tau}^2.
	\end{align*}
	Then we apply a fixed point argument in a small ball in $X_T$ by choosing $T$ very negative and obtain
	$$\|U\|_{X_T}\lesssim \|F\|_{X_T}.$$ Now,  we find a suitable solution $U$ for \eqref{eq6} provided $\|F\|_{X_T}<\infty$. Define
	$$u(t,x)=\frac{1}{\sqrt{t}}U \left(\log t, \frac{x}{\sqrt{t}}\right)$$ and it is easy to see that $u$ is a solution to \eqref{eq4}.
	Since $f\in Y$, we have $\|F\|_{X_T}<\infty$.
	In fact, we have
	$$F(\tau,\xi)=t^{3/2}f(t,x)=e^{3\tau/2} f(e^\tau,\xi e^{\tau/2}),$$ and $$\nabla^kF(\tau,\xi)=e^{3\tau/2} e^{k\tau/2}\nabla^kf(e^\tau,\xi e^{\tau/2}).$$
	The last claim is obtained by change of variables. The proof is complete.
\end{proof}

Proposition \ref{pro:1} and \cite[Theorem 1.3]{ABC22} imply that $u+\bar u$ is a Leray--Hopf solution to the forced Navier--Stokes equations \eqref{eq1}.

As the next step, we construct another Leray--Hopf solution. First, we consider $U(\xi,\tau)=\sqrt{t}u(t,x)$ and observe that by the proof of Proposition \ref{pro:1} it holds that
\begin{align}\label{est}
\|U(\tau)\|_{H^N}\lesssim e^{(a+\eps)\tau},\quad \tau\in(-\infty, T].
\end{align}
Next, we make the following ansatz for the second Leray--Hopf solution:
$$\tilde{U}=\bar U+U+U^{lin}+U^{per},$$
where $U^{lin}$ is defined in \eqref{def:Ulin}.
Consequently,  $U^{per}$ shall  satisfy
\begin{align}\label{eqper}
&\partial_\tau U^{per}-L_{ss}U^{per}+\mathbb{P}\Big(( U+U^{lin})\cdot \nabla  U^{per}+U^{per}\cdot \nabla ( U+U^{lin})\nonumber
\\&+U\cdot \nabla U^{lin} +U^{lin}\cdot \nabla U +U^{lin}\cdot \nabla U^{lin}+U^{per}\cdot \nabla U^{per}\Big)=0.
\end{align}
The latter problem can be solved by a  similar argument as in \cite[Proposition 4.5]{ABC22} as follows.

\bp\label{pro:2} Assume $N>5/2$ is an integer. Then there exist $T\in\mathbb{R}$, $0<\eps_0<a$ and $U^{per}\in C((-\infty,T];H^N)$ a solution to \eqref{eqper} such that
$$\|U^{per}(\tau)\|_{H^N}\leq e^{(a+\eps_0)\tau}, \quad \tau\leq T.$$
\ep
\begin{proof} We apply a fixed point argument in Banach space
$$X:=\{U\in C((-\infty,T];H^N): \|U\|_X<\infty\},$$
with the norm
$$\|U\|_X:=\sup_{\tau<T}e^{-(a+\eps_0)\tau}\|U(\tau)\|_{H^N},$$
with $\eps_0>\delta$ in order to guarantee convergence of a time integral in \eqref{time int} below. By the proof of  \cite[Proposition 4.5]{ABC22},
we know that
\begin{align*}&
	\Big\|\int_{-\infty}^{\tau}e^{(\tau-s)L_{ss}}\mathbb{P}\Big(U^{lin}\cdot \nabla  U^{per}+U^{per}\cdot \nabla U^{lin}+U^{lin}\cdot\nabla U^{lin}+U^{per}\cdot\nabla U^{per} \Big)\dif s\Big\|_{X}
	\\&\lesssim e^{T(a+\varepsilon_0)}\|U^{per}\|_X^2+e^{Ta}\|U^{per}\|_X+e^{T(a-\eps_0)}.
\end{align*}
Hence, it is sufficient to estimate  the following terms
$$\int_{-\infty}^{\tau}e^{(\tau-s)L_{ss}}\mathbb{P}\Big(U\cdot \nabla  U^{per}+U^{per}\cdot \nabla U
+U\cdot \nabla U^{lin} +U^{lin}\cdot \nabla U \Big)\dif s.$$
Using \eqref{est} and Lemma \ref{lem:Lss} it holds for $0<\delta<\eps_0$
\begin{equation}\label{time int}
	\aligned
	&
	\Big\|\int_{-\infty}^{\tau}e^{(\tau-s)L_{ss}}\mathbb{P}(U\cdot \nabla  U^{per}+U^{per}\cdot \nabla U )\dif s\Big\|_{H^N}
	\\&\lesssim \int_{-\infty}^{\tau}\frac{ e^{(\tau-s)(a+\delta)+s(2a+\varepsilon_0+\eps)}}{(\tau-s)^{{1/2}}}\|U^{per}\|_X\dif s
	\lesssim e^{\tau(2a+\varepsilon_0+\eps)}\|U^{per}\|_X.
	\endaligned
\end{equation}
Using \eqref{r:Ulin},
 Lemma \ref{lem:Lss} and \eqref{est} we derive
\begin{align*}&
	\Big\|\int_{-\infty}^{\tau}e^{(\tau-s)L_{ss}}\mathbb{P}(U\cdot \nabla U^{lin} +U^{lin}\cdot \nabla U )\dif s\Big\|_{H^N}
	\\&\lesssim \int_{-\infty}^{\tau}\frac{e^{(\tau-s)(a+\delta)}e^{s(2a+\varepsilon)}}{(\tau-s)^{{1/2}}}\dif s
	\lesssim e^{\tau(2a+\varepsilon)}.
\end{align*}
Combining the above estimates we  choose $T$ very negative to apply fix point argument in a small ball in $X$ to construct a solution for \eqref{eqper} (see  \cite[Proposition 4.5]{ABC22} for more details).
\end{proof}
By \eqref{r:Ulin} and Proposition \ref{pro:2} we find that $U^{lin}+U^{per}\neq0$ as the convergence rate to $-\infty$ is different. Then $\tilde u(t,x)=\frac1{\sqrt{t}}\tilde U(\tau,\xi)$ gives the second Leray solution to \eqref{eq1} as the regularity of $\tilde u$ is the same as in \cite{ABC22}.

As noted above, the solutions $\tilde u$ and $u+\bar u$ can be extended to Leray--Hopf solutions on $[0,1]$. Hence, we deduce the following results.

\bt\label{th:1}
Let  $\bar f$ be the force obtained in \cite[Theorem 1.3]{ABC22} and let $f\in Y$ be arbitrary.
There exist two distinct suitable Leray--Hopf solutions $\tilde u$ and $u+\bar u$ to the Navier--Stokes equations on $[0,1]\times\mR^3$ with body force $\bar f+f$ and initial data $u_0\equiv0$.
\et

\bc\label{c:1}
For any $\eps>0$ there exist  $h$ with  $\|h\|_{L^1((0,1);L^2)}\leq \eps$,   and two distinct suitable Leray--Hopf solutions $\tilde u_1$ and $\tilde u_2$ to the Navier--Stokes equations on $[0,1]\times\mR^3 $ with body force $h$ and initial data $u_0\equiv0$.
\ec

\begin{proof}

For any $\eps>0$ there is  $f_\eps\in Y$ obtained  by convolution with a mollifier and a suitable cut-off near $t=0$ such that
\begin{align*}
	\|f_\eps+\bar f\|_{L^1(0,1;L^2)}\leq \eps.
\end{align*}
Choosing $h=f_\eps+\bar f$, the result follows  from Theorem \ref{th:1}.
\end{proof}

\bc \label{c:2}
For any $f\in L^1(0,1;L^2)$ and $\eps>0$ there exist $g\in L^1(0,1;L^2)$ with
$$\|g-f\|_{L^1(0,1;L^2)}\leq \eps,$$
and two distinct suitable Leray--Hopf solutions $\tilde u_1$ and $\tilde u_2$ to the Navier--Stokes equations on $[0,1]\times\mR^3$ with body force $g$ and initial data $u_0\equiv0$.
\ec

\begin{proof}
For any $\eps>0$ we could find $g_\eps\in Y$ by convolution with a mollifier and a suitable cut-off near $t=0$ such that
\begin{align*}
	\|g_\eps- f\|_{L^1(0,1;L^2)}\leq \eps/2.
\end{align*}
Choosing $g=f_{\eps/2}+\bar f+g_\eps$, the result is a consequence of Theorem \ref{th:1} and the fact that $f_{\eps/2}+g_\eps\in Y$, where $f_{\eps/2}$ was defined in the  proof of Corollary  \ref{c:1}.
\end{proof}

\section{General initial conditions}\label{s:6}

In this final section, we show a simple extension of the main result of \cite{ABC22} to general initial conditions in $L^{2}$ based on an approximate controllability argument from \cite{Fla97}. More precisely, we prove the following.

\begin{theorem}\label{thm:6}
Let $u_{0}\in L^{2}_{\sigma}$. There exists a body force $f=f_{u_0}$ so that the deterministic forced Navier--Stokes equations on $[0,1]\times\R^{3}$ admit two distinct Leray--Hopf solutions with initial condition $u_{0}$.
\end{theorem}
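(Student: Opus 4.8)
The plan is to construct both solutions by concatenating three phases on $[0,1]$: a short free evolution that upgrades the regularity of the rough datum, an explicit control phase that steers the flow to rest, and finally the non-uniqueness construction of Theorem~\ref{th:1} started from zero. Fix breakpoints $t_1\in(0,1/4)$ and $1/2$. Let $v$ be a Leray--Hopf solution on $[0,1/2]$ of the Navier--Stokes system with force $0$ and initial datum $u_0$ (which exists by Leray's construction). Since $v\in L^2(0,1/2;H^1)$, we may choose $t_1$ so that $w:=v(t_1)\in H^1\cap L^2_\sigma$. On $[0,t_1]$ both solutions will coincide with $v$ and the force will vanish.

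On the control interval $[t_1,1/2]$ I prescribe the path $\phi(t):=\rho(t)\,e^{(t-t_1)\Delta}w$, where $e^{t\Delta}$ is the heat semigroup (which preserves the divergence-free constraint on $\R^3$) and $\rho\in C^\infty([t_1,1/2])$ satisfies $\rho(t_1)=1$, $\rho(1/2)=0$. Then $\phi(t_1)=w=v(t_1)$ and $\phi(1/2)=0$, so $\phi$ connects $w$ to rest. Defining the force implicitly through $f_1:=\partial_t\phi-\Delta\phi+\div(\phi\otimes\phi)+\nabla p_\phi$, the dissipative terms cancel because $\partial_t\phi-\Delta\phi=\rho'(t)e^{(t-t_1)\Delta}w$, so that $\mathbb{P}f_1=\rho'(t)e^{(t-t_1)\Delta}w+\mathbb{P}\div(\phi\otimes\phi)$. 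The first term is bounded by $|\rho'|\,\|w\|_{L^2}$, hence lies in $L^1(t_1,1/2;L^2)$. For the nonlinearity I use $\|(\phi\cdot\nabla)\phi\|_{L^2}\lesssim\|\phi\|_{L^\infty}\|\nabla\phi\|_{L^2}$ together with the smoothing estimates $\|e^{s\Delta}w\|_{L^\infty}\lesssim s^{-1/4}\|w\|_{H^1}$ (via $H^1\hookrightarrow L^6$ and the $L^6\to L^\infty$ smoothing) and $\|\nabla e^{s\Delta}w\|_{L^2}\le\|w\|_{H^1}$, giving $\|(\phi\cdot\nabla)\phi\|_{L^2}\lesssim (t-t_1)^{-1/4}\|w\|_{H^1}^2$, which is integrable near $t_1$. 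Thus $f_1\in L^1(t_1,1/2;L^2)$ and $\phi\in C([t_1,1/2];L^2)\cap L^2(t_1,1/2;H^1)$ satisfies the energy equality.

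On the final interval $[1/2,1]$ I insert the two distinct suitable Leray--Hopf solutions produced by Theorem~\ref{th:1}: writing $\tilde u^{(1)},\tilde u^{(2)}$ for $\tilde u$ and $u+\bar u$ there (Leray--Hopf on $[0,1]$ from zero datum with force $\bar f+f$), I set $\hat u^{(i)}(t):=\tilde u^{(i)}(t-1/2)$ and use the shifted force $(\bar f+f)(\cdot-1/2)$, which still lies in $L^1(1/2,1;L^2)$. Gluing, I let $F$ equal $0$ on $[0,t_1]$, $f_1$ on $[t_1,1/2]$, and $(\bar f+f)(\cdot-1/2)$ on $[1/2,1]$, so $F\in L^1(0,1;L^2)$, and I define $u^{(i)}$ to equal $v$, $\phi$, $\hat u^{(i)}$ on the three intervals. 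Because the pieces agree in $L^2$ at the breakpoints ($v(t_1)=\phi(t_1)$ and $\phi(1/2)=0=\hat u^{(i)}(1/2)$), each $u^{(i)}$ lies in $L^\infty(0,1;L^2)\cap L^2(0,1;H^1)\cap C_w([0,1];L^2)$ and the weak formulation holds on $[0,1]$ by additivity over the subintervals. The energy inequality propagates similarly: on $[0,t_1]$ it is that of $v$, on $[t_1,1/2]$ one adds the energy equality of $\phi$ (with $F=0$ in the free phase), and on $[1/2,1]$ one adds the energy inequality of $\hat u^{(i)}$ started from zero; since the boundary energies match at the breakpoints, the global bound $\|u^{(i)}(t)\|_{L^2}^2+2\int_0^t\|\nabla u^{(i)}\|_{L^2}^2\dif s\le\|u_0\|_{L^2}^2+2\int_0^t\langle F,u^{(i)}\rangle\dif s$ holds for every $t\in(0,1]$. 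Finally $u^{(1)}\neq u^{(2)}$ since they already differ on $[1/2,1]$ by Theorem~\ref{th:1}.

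The only genuine difficulty is the regularity of the control force near the start of the control phase: steering directly from the rough datum $u_0\in L^2$ by the heat flow yields a nonlinear term $\div(\phi\otimes\phi)$ of size $\sim t^{-5/4}$, which fails to be in $L^1_tL^2$. This is precisely why the preliminary free-evolution phase is inserted: the $L^2_tH^1$ bound enjoyed by any Leray--Hopf solution lets us take $w=v(t_1)\in H^1$, and the improved smoothing estimates above then reduce the singularity to the integrable rate $t^{-1/4}$. Everything else --- the time shift placing the Theorem~\ref{th:1} pair on $[1/2,1]$, and the matching of weak formulations and energies across breakpoints --- is routine.
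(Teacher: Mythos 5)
Your proposal is correct, and its overall architecture is exactly that of the paper: run a free Leray--Hopf evolution to pick up regularity, steer the flow to rest with an explicitly constructed force, then append the non-uniqueness construction started from zero data (the paper invokes \cite{ABC22} directly where you invoke Theorem~\ref{th:1}; both work, since the two ABC solutions differ already near the initial time, so their restrictions to a shifted subinterval remain distinct). The genuine difference lies in the steering step. The paper first upgrades the datum all the way to $H^{2}$: it uses weak--strong uniqueness and local strong solution theory to get $\tilde u\in C([T_{1},T^{*}];H^{1})\cap L^{2}(T_{1},T^{*};H^{2})$, picks $T^{*}$ with $\tilde u(T^{*})\in H^{2}$, and then steers by the linear-in-time interpolation $\tilde u(t)=\frac{2T^{*}-t}{T^{*}}\tilde u(T^{*})$; the $H^{2}$ bound gives $\|\mathbb{P}(\tilde u\cdot\nabla\tilde u)\|_{L^{2}}\lesssim\|\tilde u\|_{H^{2}}\|\tilde u\|_{H^{1}}$, so the implicitly defined force is even continuous in $L^{2}$. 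You stop at $H^{1}$ (available at a.e.\ time for free from the $L^{2}_{t}H^{1}_{x}$ bound) and steer along the cut-off heat flow $\rho(t)e^{(t-t_{1})\Delta}w$, paying for the lower regularity with the parabolic smoothing estimates that reduce the nonlinearity to the integrable singularity $(t-t_{1})^{-1/4}$. Your variant is thus more elementary --- it avoids weak--strong uniqueness and the local strong solution theory entirely --- at the price of a force that is only $L^{1}_{t}L^{2}_{x}$ near $t_{1}$ rather than continuous, which is all the Leray--Hopf framework requires anyway. Two minor points to make airtight: choose $t_{1}$ in the full-measure set of times at which both $v(t_{1})\in H^{1}$ and the energy inequality for $v$ holds at $t_{1}$ (needed to concatenate the energy estimates), and note that the energy equality for $\phi$ on $[t_{1},1/2]$ is justified by computing on $[t_{1}+\epsilon,1/2]$, where $\phi$ is smooth, and letting $\epsilon\to 0$ using $\phi\in C([t_{1},1/2];H^{1})$.
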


\begin{proof}
The idea is as follows. First, we show that there is a force $\tilde f$, a time $2T^{*}>0$ and  a Leray--Hopf solution $\tilde u$ to the deterministic forced Navier--Stokes equations with this force on the time interval $[0,2T^{*}]$ such that $\tilde u(2T^{*})=0$. Second, taking the final value as the initial condition on the next time interval,  we employ the technique of  \cite{ABC22} on  $[2T^{*},2T^{*}+T]$ to obtain two different Leray--Hopf solutions on $[0,2T^{*}+T]$ starting from the initial condition $u_{0}$.

In the first step, we start with an arbitrary Leray--Hopf solution $\tilde u $ to the Navier--Stokes equations with zero force and the initial condition  $u_{0}$. We can  modify the solution $\tilde u$, while keeping the same notation $\tilde u$,  so that there is a time  for which the solution belongs to $H^{2}$. Indeed, there is a time $T_{1}>0$ such that $\tilde u(T_{1})\in H^{1}$. We start $\tilde u(T_1)$ for a new unique local strong solutions
so that on some interval $[T_{1},T^{*}]$ the solution $\tilde u \in C ([T_{1},T^{*}]
; H^1) \cap L^2 (T_{1},T^{*} ; H^2)$.
In other words, making $T^{*}$ smaller if necessary, we may assume without loss
of generality that $\tilde{u} (T^{*}) \in H^2$.

Next, we intend to find a force $\tilde{f}$ so that $\tilde{u}$ extends to a
solution of the forced Navier--Stokes equations on $[0, 2T^{\ast}]$, so that
$\tilde{u} (2 T^{\ast}) = 0$.
We simply define by linear
interpolation
\[ \tilde{u} (t) : = \frac{2 T^{\ast} - t}{T^{\ast}} \tilde{u} (T^{\ast}),
   \qquad t \in [T^{*}, 2 T^{\ast}] . \]
Clearly, $\tilde{u} \in C ([T^{\ast}, 2 T^{\ast}] ; H^2)$ and since
\[ \| \mathbb{P} (\tilde{u} \cdummy \nabla \tilde{u}) \|_{L^2} \lesssim \|
   \tilde{u} \|_{L^{\infty}} \| \tilde{u} \|_{H^1} \lesssim \| \tilde{u}
   \|_{H^2} \| \tilde{u} \|_{H^1} \]
we deduce
\[ \tilde{f} \assign \partial_t \tilde{u} - \Delta \tilde{u} +\mathbb{P}
   (\tilde{u} \cdummy \nabla \tilde{u}) \in C ([T^{\ast}, 2 T^{\ast}] ; L^2) .
\]
Letting $\tilde{f} = 0$ on $[0, T^{\ast}]$, we therefore found a solution
$\tilde{u}$ to the forced Navier--Stokes equations with force $\tilde{f}$ on
$[0, 2 T^{\ast}]$ which satisfies the energy inequality on $[0,T_{1}]$, belongs to $C ([T_{1}, 2 T^{\ast}] ; H^1) \cap L^2 (T_{1}, 2
T^{\ast} ; H^2)$ and has the terminal value $\tilde{u} (2 T^{\ast}) = 0$. The
regularity of strong solution particularly implies that the energy inequality
holds true on the full time interval $[0,2T^{*}]$.

Finally, the construction from \cite{ABC22} permits to find a force and to extend the solution in a non-unique manner to some interval $[2
T^{\ast}, 2 T^{\ast} + T]$. Further extension to $[0,1]$ by usual Leray--Hopf solutions is immediate. The proof is complete.
\end{proof}

\end{document}